\documentclass[onefignum,onetabnum,backref,twoside]{siamart190516}



\usepackage{color}
\usepackage{lipsum}
\usepackage{amsfonts}
\usepackage{graphicx}
\usepackage{caption}
\usepackage{subcaption}
\usepackage{epstopdf}
\usepackage{calc}
\ifpdf
  \DeclareGraphicsExtensions{.eps,.pdf,.png,.jpg}
\else
  \DeclareGraphicsExtensions{.eps}
\fi


\newsiamremark{remark}{Remark}
\newsiamremark{hypothesis}{Hypothesis}
\crefname{hypothesis}{Hypothesis}{Hypotheses}
\newsiamthm{claim}{Claim}

\headers{Tree quasi-separable matrices}{Govindarajan, Chandrasekaran, Dewilde}

\title{Tree quasi-separable matrices: a simultaneous generalization of sequentially and hierarchically semi-separable representations
}

\author{Nithin Govindarajan\thanks{{Postdoctoral researcher, Electrical Eng. (ESAT), KU Leuven, Leuven, Belgium
  (\email{nithin.govindarajan@kuleuven.be})}}
  \and Shivkumar Chandrasekaran\thanks{Professor, Electrical and Computer Eng., University of California Santa Barbara, CA, United States
  (\email{shiv@ucsb.edu})} 
  \and
  Patrick Dewilde \thanks{Professor Emeritus, Electrical Engineering, Mathematics and Computer Science, TU Delft, Delft, The Netherlands and Institute of Advanced Study, Technical University Munich, Germany.
  (\email{p.dewilde@me.com})} 
  }

\usepackage{amsopn}


\ifpdf
\hypersetup{
  pdftitle={Macaulay},
  pdfauthor={N. Govindarajan, R. Widdershoven, S. Chandrasekaran, L. De Lathauwer}
}
\fi


\usepackage{ stmaryrd }
\usepackage{enumerate}
\usepackage{amssymb}
\usepackage{mathrsfs}
\usepackage{mathabx}
\usepackage{arydshln}
\usepackage{mathdots}
\usepackage{enumitem}
\usepackage[nocompress]{cite}
\usepackage{dsfont}
\usepackage{bm}
\usepackage{soul}
\usepackage{bbold}
\usepackage{blkarray}
\usepackage{booktabs, bigstrut}
\usepackage{caption}
\usepackage{subcaption}
\usepackage{rotating}
\usepackage{subcaption}
\usepackage{algpseudocode}
\usepackage{booktabs}
\usepackage{multirow}

\usepackage{xcolor}
\definecolor{SVDcol}{RGB}{0,0,0}
\definecolor{GECPMcol}{RGB}{199,240,160}
\definecolor{GECPCcol}{RGB}{74,226,217}
\definecolor{GEAPCcol}{RGB}{87,135,255}

\newtheorem{alg}{Algorithm}
\newlength{\alginwidth}
\settowidth{\alginwidth}{\sc{In: }}
\newlength{\algoutwidth}
\settowidth{\algoutwidth}{\sc{Out: }}


\newcommand{\field}{\mathbb{F}}

\newcommand{\naturals}{\mathbb{N}}

\DeclareMathOperator{\rank}{rank}

\DeclareMathOperator{\degnode}{deg}
\newcommand{\Id}{\mathrm{Id}}

\newcommand{\mat}[1]{\mathrm{#1}}
\newcommand{\vect}[1]{\bm{#1}}

\newcommand{\Graph}{\mathbb{G}}

\newcommand{\kNeighbor}{\mathcal{N}}
\newcommand{\NodeSet}{\mathbb{V}}
\newcommand{\EdgeSet}{\mathbb{E}}
\newcommand{\EdgeCount}{\mathcal{E}}
\newcommand{\nodeid}[1]{\lbrace#1\rbrace}
\newcommand{\Spinner}{\mat{S}}
\newcommand{\Dmat}{\mat{D}}
\newcommand{\Bmat}{\mat{B}}
\newcommand{\Pmat}{\mat{P}}
\newcommand{\Umat}{\mat{U}}
\newcommand{\Vmat}{\mat{V}}
\newcommand{\Wmat}{\mat{W}}
\newcommand{\Qmat}{\mat{Q}}
\newcommand{\Cmat}{\mat{C}}
\newcommand{\Inp}{\mat{Inp}}
\newcommand{\Trans}{\mat{Trans}}
\newcommand{\Out}{\mat{Out}}
\newcommand{\Level}{\mathcal{L}}
\newcommand{\Path}{\mathbb{P}}
\newcommand{\Parent}{\mathcal{P}}
\newcommand{\Children}{\mathcal{C}}
\newcommand{\Siblings}{\mathcal{S}}
\newcommand{\TQS}{\mat{T}}
\newcommand{\NodeSubset}[1]{\mathbb{#1}}
\newcommand{\weight}[1]{\rho_{#1}}
\newcommand{\UnitHankelBlock}[1]{\mat{H}_{#1}}
\newcommand{\state}[1]{\vect{h}_{#1}}
\newcommand{\thetavar}{\vect{\theta}}
\newcommand{\betavar}{\vect{\beta}}
\newcommand{\Descendants}{\mathcal{D}}
\newcommand{\Descendantscompl}{\bar{\mathcal{D}}}
\newcommand{\Leaves}{\mathscr{L}}
\newcommand{\Xmat}[1]{\mat{X}_{#1}}
\newcommand{\Ymat}[1]{\mat{Y}_{#1}}
\newcommand{\Zmat}[1]{\mat{Z}_{#1}}
\newcommand{\Fmat}[1]{\mat{F}_{#1}}
\newcommand{\Gmat}[1]{\mat{G}_{#1}}

\newcommand{\zeroset}{\{ 0 \}}
\newcommand{\leftchild}{\text{\textsc{l}}}
\newcommand{\rightchild}{\text{\textsc{r}}}




\usepackage{tikz}
\usepackage{pgfplots}
\pgfplotsset{compat = newest}
\usepgfplotslibrary{fillbetween}

\begin{document}

\maketitle

\begin{abstract}
 We present a unification and generalization of what is known in the literature as sequentially and hierarchically semi-separable (SSS and HSS) representations for matrices. Describing rank-structured representations of (inverses of) sparse matrices whose adjacency graph is a tree, it is shown that these so-called tree quasi-separable (TQS) matrices inherit all the favorable algebraic properties of SSS and HSS under addition, products, and inversion. To arrive at these properties, we prove a key result that characterizes the conversion of any dense matrix into a TQS representation. Here, we specifically show through an explicit construction procedure that the generator sizes are dictated by the ranks of certain Hankel blocks of the matrix. Analogous to SSS and HSS, TQS matrices admit fast matrix-vector products and direct solvers provided the generator sizes are small. A sketch of the associated algorithms is provided.
\end{abstract}

\begin{keywords}
TQS matrices, HSS matrices, SSS matrices, GIRS, rank-structured matrices,  quasi-separable matrices
\end{keywords}

\begin{AMS}
15A23
\end{AMS}

\section{Introduction} 

Matrices in applied problems of interest often exhibit a structure of low rank in their off-diagonal blocks.  These structures have, for instance, been observed in the discretization of integral equations \cite{rokhlin1985rapid}, Schur complements of discretizations of PDEs \cite{chandrasekaran2010numerical,chandrasekaran2010numerical, xia2012robust}, certain Cauchy-like matrices \cite{chandrasekaran2008superfast, xia2012superfast}, evaluations of potentials \cite{greengard1987fast}, and companion matrices \cite{aurentz2015fast}, amongst others. Many frameworks have been proposed to efficiently represent these low-rank structures so that efficient linear algebra operations can be performed with such matrices. This includes the fast multiple method (FMM) \cite{greengard1987fast}, semi-separable and quasi-separable matrices \cite{vandebril2008matrix,bella2008computations, eidelman1999new}, sequentially semi-separable (SSS) matrices \cite{chandrasekaran2002fast,chandrasekaran2005some}, hierarchically semi-separable (HSS) matrices \cite{chandrasekaran2006fast,xia2010fast}, $\mathcal{H}$- and $\mathcal{H}^2$-matrices \cite{borm2003introduction,hackbusch2002data,hackbusch2015hierarchical}, and hierarchically off-diagonal low-rank (HODLR) matrices \cite{ambikasaran2013mathcal,aminfar2016fast}. These frameworks are all related and have their benefits, pitfalls, and special use cases.  A complete review of the subject goes beyond the scope of this paper.

This paper is concerned with the structure that arises from the inverse of a (block-)sparse matrix whose adjacency graph is a tree. This will generally be a dense matrix but with a special low-rank structure. In particular, if we look at sums and products of such matrices, they retain these structures, even though they no longer are the inverse of a sparse matrix. In this paper, we present a new class of representations for rank-structured matrices that actually can capture these structures exactly.  In fact, we show that these representations satisfy a graph-induced rank structure (GIRS) property if the corresponding graph of the associated graph-partitioned matrix is a tree (see \cite{chandrasekaran2019graph}). The representations are referred to \emph{tree quasi-separable} (TQS) matrices. 

Interestingly, TQS matrices simultaneously unify and generalize SSS and HSS matrices.  Apart from introducing a new family of rank-structured matrices, an important technical contribution of our paper is an algorithm that realizes a minimal TQS representation for any dense matrix. This algorithm is, in effect, a unification and generalization of the algorithms for doing the same for SSS and HSS matrices. The algorithm allows us to prove a result (\Cref{thm:universality}) that fully characterizes the properties of TQS representation for any dense matrix. Specifically, for a given tree structure and block partitioning of the matrix, the dimensions of the TQS generators are dictated by the ranks of certain matrix sub-blocks referred to as Hankel blocks. Through this fact, we show that TQS inherits the favorable algebraic properties of SSS and HSS matrices under addition, products, and inversion. Importantly, the inverse of a TQS matrix is again a TQS matrix of exactly the same rank profile if the input and output dimensions of the partitions are chosen equally. TQS matrices allow for fast inversion algorithms and matrix-vector products provided the generator sizes are small. We present one such fast direct solver using the sparse embedding trick introduced in \cite{chandrasekaran2007fast} for HSS matrices. 

It is well-known that SSS and HSS matrices are used extensively in practice. TQS matrices are a more flexible drop-in replacement for HSS and SSS matrices. We anticipate that this flexibility can lead to even further improvements in many applications, particularly for networked dynamical systems on graphs \cite{verhaegen2022data}. Although applications are the subject of future work, we present some illustrative examples of sparse matrices where the choice of a TQS representation is both natural and efficient. 

The remainder of this paper is outlined as follows. In \Cref{sec:review}, we give a quick review of SSS and HSS matrices. \Cref{sec:TQSdef} introduces TQS matrices in detail. The algebraic properties of TQS matrices are discussed in \Cref{sec:TQSalgebraic}. In \Cref{sec:construction}, we discuss the construction algorithm, and along with it the proof of \Cref{thm:universality}.  \Cref{sec:TQSsystems} covers the fast matrix-vector multiply algorithm and the procedure for efficiently solving a linear system involving TQS matrices.


\section{A brief recap on SSS and HSS matrices} \label{sec:review}
This section briefly reviews the definitions of SSS and HSS matrices.
We emphasize that we use a rather unconventional, and admittedly, redundant notation in the definitions of both SSS and HSS matrices. However, this is done intentionally to establish a direct link with TQS matrices that will require more detailed notation. \Cref{sec:SSS} covers SSS matrices and \Cref{sec:HSS} covers HSS matrices. \Cref{sec:HSSandSSSalgebra} briefly summarizes performing algebra with SSS and HSS matrices. For more details on the algorithms themselves, we recommend \cite{chandrasekaran2005some,chandrasekaran2002fast}
for SSS and \cite{xia2010fast, chandrasekaran2006fast, wang2013efficient, xia2012complexity} for HSS. A more tutorial-styled introduction to the subject is given in \cite{shivnithinabhe}.

\subsection{SSS matrices} \label{sec:SSS}
Sequentially semi-separable (SSS) matrices were first introduced in \cite{dewilde1998time} in a study to generalize systems theory to the time-varying case.  To define SSS matrices, let $m_i,n_i\in \naturals \cup \zeroset$ for $i\in \left\lbrace 1,\ldots,K\right\rbrace$ and $\weight{(i,i+1)}\in \naturals \zeroset$ for $i\in \left\lbrace 1,\ldots,K-1\right\rbrace$ and introduce the matrices $\Dmat^i  \in \field^{m_i \times n_i}$ for $i\in \left\lbrace 1,\ldots,K\right\rbrace$,  $\Bmat^i_{i+1} \in \field^{\weight{(i,i+1)} \times n_i }$ for $i\in \left\lbrace 1,\ldots,K-1\right\rbrace$,  
$\Umat^i_{i+1,i-1} \in \field^{\weight{(i,i+1)} \times \weight{(i-1,i)}}$ for $i\in \left\lbrace 2,\ldots,K-1\right\rbrace$, $\Cmat^i_{i-1} \in \field^{m_i \times \weight{(i-1,i)}}$ for $i\in \left\lbrace 2,\ldots,K\right\rbrace$, $\Pmat^i_{i-1} \in \field^{\weight{(i,i-1)} \times n_i}$ for $i\in \left\lbrace 2,\ldots,K\right\rbrace$, $\Wmat^i_{i-1,i+1 } \in \field^{\weight{(i,i-1)} \times \weight{(i+1,i)}}$ for $i\in \left\lbrace 2,\ldots,K-1\right\rbrace$, $\Qmat^i_{i+1 } \in \field^{m_i\times \weight{(i+1,i)}}$ for $i\in \left\lbrace 1,\ldots,K\right\rbrace$. Next, define    
\begin{equation}
    \mat{A}_{ij } := \begin{cases} 
    \Dmat^{i} & i=j  \\
    \Cmat^{i}_{i-1} \Umat^{i-1}_{i,i-2}  \cdots \Umat^{j+1}_{j+2,j} \Bmat^{j}_{j+1}   & i> j \\
    \Qmat^{i}_{i+1} \Wmat^{i+1}_{i,i+2}  \cdots \Wmat^{j-1}_{j-2,j} \Pmat^{j}_{j-1}   & i < j
    \end{cases}. \label{eq:SSS}
\end{equation}
A sequentially semi-separable (SSS) matrix $\mat{A}\in\field^{M \times N}$, with $M=\sum^K_{i=1} m_i$ and $N=\sum^K_{i=1} n_i$, is the block-partitioned matrix with block entries specified by \eqref{eq:SSS}. For example, in the case of $K=4$, an SSS matrix takes on the form
\begin{equation}
\mat{A} = \begin{bmatrix}
\Dmat^1               &          \Qmat^1_2 \Pmat^2_1             &    \Qmat^1_2 \Wmat^2_{1,3} \Pmat^3_2  &    \Qmat^1_2 \Wmat^2_{1,3} \Wmat^3_{2,4} \Pmat^4_3    \\
\Cmat^{2}_1 \Bmat^1_2   &  \Dmat^2  &  \Qmat^2_3 \Pmat^3_{2} & \Qmat^2_3 \Wmat^3_{2,4} \Pmat^4_3  \\
\Cmat^{3}_2 \Umat^{2}_{1,3} \Bmat^1_2          &  \Cmat^{3}_2 \Bmat^2_3         &  \Dmat^3 & \Qmat^3_4 \Pmat^4_3      \\
\Cmat^{4}_2 \Umat^{3}_{4,2} \Umat^{2}_{3,1} \Bmat^1_2     &  \Cmat^{4}_2 \Umat^{3}_{4,2} \Bmat^2_3  & \Cmat^{4}_3 \Bmat^3_2   & \Dmat^4
\end{bmatrix}. 
\label{eq:SSSSn4}
\end{equation}
The entries of the SSS representation \eqref{eq:SSS} can be seen as the result of decomposing $\mat{A}$ as the sum of a causal and anti-causal linear-time-variant (LTV) system on a line graph. At this point of our discussion, it may be important to remark that the phrase ``sequentially semi-separable'' has been chosen rather inconveniently in the literature since SSS matrices are effectively a representation for the much richer class of quasi-separable matrices \cite{bella2008computations,eidelman1999new}. The prefix ``sequentially quasi-separable'' would have been more appropriate. 

\subsection{HSS matrices} \label{sec:HSS}

While SSS matrices have their origins in systems theory, hierarchically semi-separable (HSS) matrices arose independently to simplify the algebra of the fast multipole method (FMM) so that it has favorable properties under inversion. To define HSS matrices, we recursively partition a matrix $\mat{A}\in\field^{M \times N}$ in a hierarchic manner. This process is best described through a binary tree. 

Assume that the nodes of the binary tree are indexed by a \emph{post-ordered} traversal of the tree and let $r$ denote its root node. Furthermore, let $\leftchild(l)$ (respectively, $\rightchild(l)$) denote the left (respectively, right) child of node $l$ in the binary tree. If $l$ is a terminating point of the recursion, or equivalently, a leaf node of the binary tree, we set $\mat{A}^{l} = \Dmat^{l}\in\field^{m_{l} \times n_{l}}$. On the other hand, if $l$ is a non-terminating point of the recursion, we set 
\begin{equation}
    \mat{A}^{l} = \begin{bmatrix}
        \mat{A}^{\leftchild(l)}   & \mathcal{Q}^{\leftchild(l)}_{ l }   \Vmat^{l}_{ \leftchild(l) , \rightchild(l) }  \mathcal{B}^{\rightchild(l) }_{l}  \\
\mathcal{Q}^{\rightchild(l) }_{l}   \Vmat^{l}_{\rightchild(l),\leftchild(l)}  \mathcal{B}^{\leftchild(l)}_{l}  & \mat{A}^{\rightchild(l)}
    \end{bmatrix},
\end{equation}
where $\mat{A}^{\leftchild(l)}\in\field^{ m_{\leftchild(l)} \times   n_{\leftchild(l)} }$, $\mat{A}^{\rightchild(l)}\in\field^{m_{\rightchild(l)} \times  n_{\rightchild(l)}}$,  
$m_{l} = m_{\leftchild(l)} + m_{\rightchild(l)}$, and $n_{l} = n_{\leftchild(l)} + n_{\rightchild(l)}$. Furthermore, $\mathcal{Q}^{ \leftchild(l)}_{l} = \Qmat^{ \leftchild(l) }_{l} \in \field^{m_{\leftchild(l)} \times \weight{(l, \leftchild(l))}}$ (respectively, $\mathcal{Q}^{ \rightchild(l)}_{l} = \Qmat^{ \rightchild(l) }_{l} \in \field^{m_{\rightchild(l)} \times \weight{(l, \rightchild(l))}}$) if $\leftchild(l)$ (respectively, $\rightchild(l)$) is a leaf node of the binary tree. Otherwise,
$$
\mathcal{Q}^{\leftchild(l)}_{l} = \begin{bmatrix}
  \mathcal{Q}^{\leftchild(\leftchild(l))}_{ \leftchild(l) }  \Wmat^{\leftchild(l)}_{ \leftchild(\leftchild(l))  , l }     \\
  \mathcal{Q}^{\rightchild(\leftchild(l))}_{\leftchild(l)}   \Wmat^{\leftchild(l)}_{ \rightchild(\leftchild(l)),l}    
\end{bmatrix}, \qquad \mathcal{Q}^{\rightchild(l)}_{l} = \begin{bmatrix}
  \mathcal{Q}^{\leftchild(\rightchild(l))}_{\rightchild(l) }  \Wmat^{\rightchild(l)}_{\leftchild(\rightchild(l)),l}     \\
  \mathcal{Q}^{\rightchild(\rightchild(l))}_{\rightchild(l)}   \Wmat^{\rightchild(l)}_{ \rightchild(\rightchild(l)),l} 
\end{bmatrix}.
$$
with $\Wmat^{\leftchild(l)}_{ \leftchild(\leftchild(l)),l} \in \field^{ \weight{(\leftchild(l), \leftchild(\leftchild(l)))} \times \weight{(l, \leftchild(l))}}$, $\Wmat^{\leftchild(l)}_{ \rightchild(\leftchild(l)),l} \in \field^{ \weight{(\leftchild(l), \rightchild(\leftchild(l)))} \times \weight{(l, \leftchild(l))}}$, \\
$\Wmat^{\rightchild(l)}_{ \rightchild(\leftchild(l)),l} \in \field^{ \weight{(\rightchild(l), \rightchild(\leftchild(l)))} \times \weight{(l, \rightchild(l))}}$, $\Wmat^{\rightchild(l)}_{ \rightchild(\rightchild(l)),l} \in \field^{ \weight{(\rightchild(l), \rightchild(\rightchild(l)))} \times \weight{(l, \rightchild(l))}}$. Similarly, $\mathcal{B}^{ \leftchild(l)}_{l} = \Bmat^{ \leftchild(l) }_{l} \in \field^{ \weight{(\leftchild(l),l)}  \times n_{\leftchild(l)}}$ (respectively, $\mathcal{B}^{ \rightchild(l)}_{l} = \Bmat^{ \rightchild(l) }_{l} \in \field^{ \weight{(\rightchild(l), l)} \times n_{\rightchild(l)} }$) if $\leftchild(l)$ (respectively, $\rightchild(l)$) is a leaf node of the binary tree. Otherwise,
\begin{eqnarray*}
 \mathcal{B}^{ \leftchild(l)}_{l}    & = & \begin{bmatrix}
   \Umat^{\leftchild(l)}_{l , \leftchild(\leftchild(l)) } \mathcal{B}^{\leftchild(\leftchild(l))}_{\leftchild(l)}   &  \Umat^{ \leftchild(l)   }_{ l , \rightchild(\leftchild(l)) } \mathcal{B}^{\rightchild(\leftchild(l))}_{\leftchild(l)} 
 \end{bmatrix},  \\
  \mathcal{B}^{ \rightchild(l)}_{l}  & = &   \begin{bmatrix}
   \Umat^{\rightchild(l)}_{l,\leftchild(\rightchild(l))} \mathcal{B}^{\leftchild(\rightchild(l))}_{\rightchild(l)}    & \Umat^{\rightchild(l)}_{ l , \rightchild(\rightchild(l)) } \mathcal{B}^{\rightchild(\rightchild(l))}_{\rightchild(l)}
 \end{bmatrix} 
\end{eqnarray*}
with $\Umat^{\leftchild(l)}_{l, \leftchild(\leftchild(l))} \in \field^{ \weight{(\leftchild(l),l)}  \times \weight{( \leftchild(\leftchild(l)),\leftchild(l))} }$, $\Umat^{\leftchild(l)}_{ \rightchild(\leftchild(l)),l} \in \field^{ \weight{(\leftchild(l), l)}  \times  \weight{(\rightchild(\leftchild(l)), \leftchild(l))}  }$,  \\ $\Umat^{\rightchild(l)}_{ \leftchild(\rightchild(l)),l} \in \field^{ \weight{(\rightchild(l), l)}   \times \weight{(\leftchild(\rightchild(l)), \rightchild(l))} }$,  $\Umat^{\rightchild(l)}_{ l, \rightchild(\rightchild(l))} \in \field^{ \weight{(\rightchild(l),l)}  \times \weight{(\rightchild(\rightchild(l)) , \rightchild(l))}  }$. At the root level, we set $\mat{A} = \mat{A}^{r}\in\field^{m_{r} \times n_{r}}$ with $m_{r}=:M$ and $n_{r}=:N$. For example, the matrix 
\begin{eqnarray}
     \mat{A} & = &  \begin{bmatrix} \Dmat^{1} &  
\Qmat^{1}_{3} \Vmat^{3}_{1,2} \Bmat^{2}_{3}  &   \Qmat^1_{5}  \Wmat^3_{1,5} \Vmat^{5}_{3,4} \Bmat^{4}_{5} \\
 \Qmat^{1}_{3} \Vmat^{3}_{2,1} \Bmat^{1}_{3}   & \Dmat^{2}  &      \Qmat^2_{5}  \Wmat^3_{1,5} \Vmat^{5}_{3,4} \Bmat^{4}_{5}           \\
    \Qmat^4_{5}  \Vmat^5_{4,3} \Umat^{3}_{5,1} \Bmat^{1}_{3}     &    \Qmat^4_{5}  \Vmat^5_{4,3} \Umat^{3}_{5,2} \Bmat^{2}_{3}          &  \Dmat^{4} 
\end{bmatrix} \label{eq:HSSexample} \\
   & = & \begin{bmatrix}
        \mat{A}^{3}   & \mathcal{Q}^{3}_{5}   \Vmat^{5}_{3,4}  \mathcal{B}^{4 }_{5}  \\
\mathcal{Q}^{4}_{5}  \Vmat^{5}_{4,3}  \mathcal{B}^{3}_{5}  & \mat{A}^{4} \end{bmatrix} \nonumber \\
   & = &  \mat{A}^5  \nonumber
\end{eqnarray}
 is an HSS matrix.
\subsection{Algebra with SSS and HSS matrices} \label{sec:HSSandSSSalgebra}
SSS and HSS matrices share common algebraic properties. Any dense matrix can be converted into an SSS or HSS matrix. The dimensions of the generators, and thus the efficiency of the representation, are specified by the ranks of the off-diagonal (i.e., so-called Hankel) blocks. Sums and products of SSS (respectively HSS) matrices are again SSS (respectively HSS) but with a doubling in the size of the generators. The inverse of an SSS (respectively HSS) is an SSS (respectively HSS) matrix of the same generator dimensions. SSS (respectively HSS) representations of matrices that have small Hankel block ranks can be multiplied with vectors in linear time. The same holds for the solve operation. For the latter,  one common approach to both representations is a lifting technique that solves the linear system as a larger block-sparse system \cite{chandrasekaran2007fast}. All these commonalities do not come from nowhere, since SSS and HSS belong to the same family of a more general class of matrices. 

\section{Tree quasi-separable (TQS) matrices} \label{sec:TQSdef}
This section formally introduces TQS matrices. To do so, we first introduce some terminology in \Cref{sec:treegraphs} and \Cref{sec:graphpartitioned}. The actual definition of TQS matrices is given in \Cref{sec:TQSdefinition}. Finally, \Cref{sec:HSSandSSSandTQS} describes SSS and HSS as special cases of TQS matrices.

\subsection{Tree graphs} \label{sec:treegraphs}
Let $\Graph = (\NodeSet,\EdgeSet)$, with node set $\mathbb{V}=\lbrace1,2,\ldots,K\rbrace$, be a \emph{connected acyclic undirected graph}. Here, for reasons that will be clear later, the edge set $\EdgeSet$ is \emph{unconventionally} interpreted as a collection \emph{ordered pairs} of nodes (instead of unordered pairs!), but with the property that $(i,j)\in \EdgeSet$ if and only if $(j,i)\in \EdgeSet$, i.e., the edges in both directions are included.  The number of edges incident to a node $i \in \NodeSet$, i.e. its degree, is denoted by $\degnode(i)$. Since $\Graph$ is acyclic, every pair of nodes $i,j \in \Graph$ is connected by one, and only one, \emph{path}\footnote{Here we do not allow for self-intersecting paths. A ``path'' passing from node $i$ to node $j$ by passing through $i$ and going to $k$, to subsequently come back to node $j$, is \emph{not} considered a valid path!} 
$$\Path(i,j)=i-w_2-\cdots-w_{p-1}-j$$
of specific length $p$. 

The graph $\Graph$ may be interpreted as a rooted tree\footnote{Although one may work without such an interpretation, this viewpoint shall be useful for deriving some of the results in the paper. Specifically, it allows the labeling of the generating matrices, which in turn simplifies validating the correctness of \Cref{alg:construction}.}. 
Indeed, if $\kNeighbor(i;k)$ denotes the \emph{set of $k$-neighbors} of node $i\in\mathbb{V}$, i.e., the set of nodes that can be reached by $i$ through traversing a path of length $k$, then a tree $\Graph(r)$ is induced by picking any $r\in \NodeSet$ and setting 
\begin{displaymath}
\NodeSet_0 = \lbrace r \rbrace, \quad \NodeSet_1 = \kNeighbor(r;1),\quad \NodeSet_2 = \kNeighbor(r;2),\quad\ldots,\quad \NodeSet_L = \kNeighbor(r;L).  
\end{displaymath}
The depth of the tree specified by the partition $\NodeSet = \NodeSet_0 \cup \NodeSet_1 \cup \ldots \cup \NodeSet_{L}$ equals $L$ and is defined as the smallest number with the property $\kNeighbor(r; L+1) = \emptyset$.  The level $l$ at which a node $i\in\NodeSet_l$ resides within the tree is denoted by $\Level(i)$ (note that this depends implicitly on the choice of root node made earlier!). An edge $(i,j)\in\EdgeSet$ with $\Level(i)=\Level(j)-1$ is called an \emph{up-edge} since it is directed towards the root node. Likewise, if $\Level(i)=\Level(j)+1$ it is called a \emph{down-edge} since it is directed away from the root node.

Within the setting of the tree $\Graph(r)$, a node $j\in \NodeSet$ is consider to be child of $i\in \kNeighbor(j) = \kNeighbor(j;1)$ if $i\in\NodeSet_{l}$ and $j\in\NodeSet_{l+1}$. Vice versa, $i$ is the parent of node $j$. The notion of children and parents may be further generalized: $j\in \NodeSet$ is a $k$-child (with $k>0$) of $i\in\kNeighbor(j;k)$ (and vice versa $i$ is the $k$-parent of node $j$) if $i\in\NodeSet_{l}$ and $j\in\NodeSet_{l+k}$. Two nodes $i,j\in\NodeSet_l$ are siblings ($k$-siblings) if they both share the same parent node $w\in \NodeSet_{l-1}$ ($w\in \NodeSet_{l-k}$).  Naturally, a node will have at most one $k$-parent, but it may have many $k$-children or $k$-siblings.  The parent ($k$-parent) of a node $i\in \NodeSet$ is denoted by $\Parent(i)$ ($\Parent(i;k)$) and this set may be empty (e.g., for the root node). The set of children ($k$-children) of node $i\in \NodeSet$ is denoted by $\Children(i)$ ($\Children(i;k)$). The set of siblings ($k$-siblings) of node $i\in \NodeSet$ is denoted by $\Siblings(i)$ ($\Siblings(i;k)$). The set of descendants of node $i\in \NodeSet$ and including $i$ itself is denoted by
\begin{displaymath}
    \Descendants(i):= i \cup \Children(i;1) \cup \Children(i;2) \cup \ldots \cup \Children(i;L-\Level(i)).
\end{displaymath}
and we set  $\Descendantscompl(i):= \NodeSet \setminus \Descendants(i)$. A node $i\in\mathbb{V}$ is called a \emph{leaf node} if its set of descendants is empty. Note that leaf nodes may reside at any level of the tree. The set of leaf nodes of an acyclic fully connected graph is denoted by $\Leaves(\Graph)$. 

\subsection{Graph-partitioned matrices} \label{sec:graphpartitioned}
We can associate the graph $\Graph=(\NodeSet,\EdgeSet)$ with a block-partitioned matrix. To do this, associate each node with an input of size $n_i\in \naturals \cup \zeroset$ and an output of size $m_i \in \naturals \cup \zeroset$. Note that we allow the sizes of the inputs and outputs to be \emph{empty}! Now, let 
\begin{displaymath}
    M=\sum_{i\in \NodeSet} m_i, \quad N=\sum_{i\in \NodeSet} n_i.
\end{displaymath}
We may then introduce a matrix $\TQS\in \field^{M\times N}$ that is assembled from the system of linear equations
\begin{equation}\label{eq:block_matrix_multiplication}
 \vect{b}_{i} = \sum_{j\in \NodeSet} \TQS\nodeid{i,j} \vect{x}_j, \quad i\in\NodeSet.  
\end{equation}
That is,
\begin{equation}
    \TQS := \begin{blockarray}{ccccc}
           & 1 & 2 & \cdots & K \\
        \begin{block}{c[cccc]}
     1 &  \TQS\nodeid{1,1} & \TQS\nodeid{1,2} & \cdots &\TQS\nodeid{1,K} \\
    2 & \TQS\nodeid{2,1} & \TQS\nodeid{2,2} & \cdots &\TQS\nodeid{2,K} \\
    \vdots & \vdots & \vdots &  & \vdots \\
    K & \TQS\nodeid{K,1} & \TQS\nodeid{K,2} & \cdots &\TQS\nodeid{K,K} \\
        \end{block}
    \end{blockarray},
\end{equation}
where $\TQS\nodeid{i,j}\in \field^{m_i\times n_j}$ is the matrix associated with the contribution of the input and node $i\in\NodeSet$ to the output at node $j\in\NodeSet$. Given two subsets $\NodeSubset{A} = \lbrace i_1,  \ldots, i_A \rbrace\subset \NodeSet$ and $\NodeSubset{B} = \lbrace j_1, \ldots, j_B \rbrace\subset \NodeSet$ we sub-matrices of $\TQS$ with the notation
\begin{displaymath}
    \TQS \nodeid{\NodeSubset{A},\NodeSubset{B}} = \begin{blockarray}{cccc} 
     & j_{1} & \cdots & j_{B} \\
     \begin{block}{c[ccc]}
        i_1 &   \TQS\nodeid{i_1,j_1} &  \cdots &\TQS\nodeid{i_1,j_B} \\
    \vdots  & \vdots &   & \vdots \\
    i_A & \TQS\nodeid{i_A,j_1} &  \cdots &\TQS\nodeid{i_A,j_B} \\
    \end{block}
    \end{blockarray}.
\end{displaymath}
Actual entries of the matrix $\TQS$ or its block-components $\TQS\nodeid{i,j}$ are accessed using a square-bracket notation, i.e., $\TQS[k,l]$ and $\TQS\nodeid{i,j}[k,l]$ refer to the $(k,l)$-th entry of $\TQS$ and $\TQS\nodeid{i,j}$, respectively.

The matrix $\TQS$ alongside with $\Graph$ is referred to as a \emph{graph-partitioned matrix}. Note that this definition is agnostic to whether $\Graph$ is a tree or not. Nonetheless, in this paper, we will limit ourselves to only acyclic undirected graphs that may also be viewed as trees.

\subsection{Definition of TQS matrices} \label{sec:TQSdefinition}
A tree quasi-separable (TQS) matrix is a specific construction of a graph-partitioned matrix $\TQS$ associated with a fully connected and acyclic graph $\Graph = (\NodeSet,\EdgeSet)$. To form a TQS matrix from $\Graph$, we equip the graph with weights on the edges of the graph: every edge $(i,j) \in \mathbb{E}$ is associated with a weight $\weight{(i,j)} \in \naturals \cup \zeroset$. Furthermore, every edge $(i,j) \in \mathbb{E}$ is associated with a state vector $\state{(i,j)}\in \field^{\weight{(i,j)}}$. Specifically, $\state{(i,j)}$ describes a state computed in node $i$ and transmitted to node $j$.  The set  $\lbrace \weight{e} \rbrace_{e\in \EdgeSet}$ is referred to as the \emph{rank-profile} on $\Graph$. It collectively describes the state dimensions of all the edges. The TQS matrix is formed by running an input-driven LTV dynamical system on the graph.  To describe this LTV system, the following matrices are introduced (see also \Cref{fig:TQSelements}):
\begin{itemize}
\item an \emph{input-to-output} operator $\Dmat^k \in \field^{m_k\times n_k}$ for every node $k\in\NodeSet$ describing a mapping from input $\vect{x}_k$ to output $\vect{b}_k$,
\item an \emph{input-to-edge} operator $\Inp^k_j \in \field^{\weight{(k,j)}\times n_k}$ for every node $k\in \kNeighbor(j)$ and adjoining edge $(k,j)\in \EdgeSet$ describing a mapping from input $\vect{x}_k$ to state $\state{(k,j)}$, 
\item an \emph{edge-to-edge} operator $\Trans^k_{i,j}\in \field^{\weight{(j,k)}\times \weight{(j,k)}}$ for every pair of adjoining edges $(j,k)\in\EdgeSet$ and $(k,i)\in\EdgeSet$ (with $i,j\in \kNeighbor(k)$) describing a mapping from state $\state{(j,k)}$ to state $\state{(k,i)}$,
\item and an \emph{edge-to-output} operator $\Out^k_i \in \field^{m_k \times\weight{(i,k)}}$ for every edge $(i,k)\in\EdgeSet$ and adjoining node $i\in\kNeighbor(k)$ describing a mapping from state $\state{(j,k)}$ to output $\vect{b}_k$.
\end{itemize}
 Notice that matrices (operators) belong to a node indicated by the superscript, while the subscripts implicitly refer to edges attached to the respective node, indicating the destination of the data being computed in that node. A TQS matrix is then defined as follows.
\begin{definition}[tree quasi-separable matrices]\label{def:TQS}
    Let $\Graph = (\NodeSet,\EdgeSet)$ be a connected acyclic undirected graph. A tree quasi-separable (TQS) matrix with input dimensions $\lbrace n_i \rbrace_{i \in \NodeSet}$, output dimensions $\lbrace m_i \rbrace_{i \in \NodeSet}$, and rank-profile $\lbrace \weight{e} \rbrace_{e\in \EdgeSet}$, is a graph-partitioned matrix $\TQS\in\field^{M\times N}$ of size $M=\sum_{i\in \NodeSet} m_i$ by $N=\sum_{i\in \NodeSet} n_i$  whose block-entries are specified by
    \begin{equation}
\TQS\nodeid{i,j} = \begin{cases}
    \Dmat^i & i=j \\
      \Out^i_{p_{\nu-1}} \Trans^{p_{\nu-1}}_{i,p_{\nu-2}}      \cdots \Trans^{p_2}_{p_3,p_1} \Trans^{p_1}_{p_2,j} \Inp^j_{p_1} & i \ne j,
\end{cases}
    \end{equation} 
    with $j=p_0-p_1-\cdots-p_{\nu}=i$ being the unique path from node $j\in\NodeSet$ to node $i\in\NodeSet$.
\end{definition}

\begin{figure}[h]
  \centering
  \begin{subfigure}{0.23\textwidth}
    \centering
    \begin{tikzpicture}
      \node[circle, draw, fill=black, inner sep=1.5pt] (myNode) at (0,0) {};
  \node[right] at (myNode.north) {$k$}; 
    \end{tikzpicture}
    \caption{Every node $k\in \NodeSet$ is equipped with an \emph{input-to-output} operator.}
  \end{subfigure}
  \hfill
  \begin{subfigure}{0.23\textwidth}
    \centering
    \begin{tikzpicture}
        \node[circle, draw, fill=black, inner sep=1.5pt] (myNode) at (0,0) {};
  \node[right] at (myNode.north) {$k$}; 
  \draw[-] (myNode) -- ++(330:1.2cm) node[midway, right, pos=1.0] {$(k,j)$};
    \end{tikzpicture}
    \caption{Every node $k\in \kNeighbor(j)$ with adjoining edge $(k,j)\in \EdgeSet$ is equipped with an \emph{input-to-edge} operator.}
  \end{subfigure}
 \hfill
  \begin{subfigure}{0.23\textwidth}
    \centering
    \begin{tikzpicture}
     \node[circle, draw, fill=black, inner sep=1.5pt] (myNode) at (0,0.5) {};
  \node[right] at (myNode.north) {$k$}; 
  \draw[-] (myNode) -- ++(290:1.2cm) node[midway, right, pos=1.0] {$(k,j)$};
    \draw[-] (myNode) -- ++(250:1.2cm) node[midway, left, pos=1.0] {$(j,k)$};
    \end{tikzpicture}
    \caption{Every pair of adjoining edges $(j,k)\in\EdgeSet$ and $(k,i)\in\EdgeSet$ is equipped with an \emph{edge-to-edge} operator.}
  \end{subfigure}
  \hfill
  \begin{subfigure}{0.23\textwidth}
    \centering
    \begin{tikzpicture}
     \node[circle, draw, fill=black, inner sep=1.5pt] (myNode) at (0,0) {};
  \node[right] at (myNode.north) {$k$}; 
  \draw[-] (myNode) -- ++(210:1.2cm) node[midway, left, pos=1.0] {$(i,k)$};
    \end{tikzpicture}
    \caption{Every edge $(i,k)\in\EdgeSet$ with adjoining node $i\in\kNeighbor(k)$ is equiped with an \emph{edge-to-output} operator.}
  \end{subfigure}
  \caption{TQS operators associated with elements of the graph $\Graph$.} \label{fig:TQSelements}
\end{figure}

The notation of the generating matrices of the TQS representation in definition~\ref{def:TQS} is \emph{agnostic} to a tree order. Later on, in \Cref{sec:construction}, we shall address the realization problem \emph{or} the problem of constructing a TQS representation from a dense graph-partitioned matrix. To aid this construction, it shall be useful to ``color'' the generating matrices with respect to their orientation in the graph. Particularly, once a root node $r\in\NodeSet$ is chosen for the graph, we introduce additional \emph{nomenclature} to distinguish between different input-to-edge, edge-to-edge, and edge-to-output operators\footnote{Mind that these operators are well-defined irrespective of the choice of the root.}:
\begin{itemize}
    \item An \emph{input-to-edge} operator $\Inp^k_j$ is denoted by $\Bmat^k_j$ if $j\in\NodeSet$ is a {parent} of $k\in\NodeSet$  (i.e., $j=\Parent(k)$), and by $\Pmat^k_j$ if $j\in\NodeSet$ is a {child} of $k\in\NodeSet$  (i.e., $j\in\Children(k)$).
    \item  An \emph{edge-to-edge} operator $\Trans^k_{i,j}$ is denoted by $\Umat^k_{i,j}$ if $j\in\NodeSet$ and $i\in\NodeSet$ are respectively a child and parent of $k\in\NodeSet$ (i.e., $j\in\Children(k)$ and $i=\Parent(k)$), by $\Vmat^k_{i,j}$ if $i,j\in\NodeSet$ are siblings of $k\in\NodeSet$ (i.e. $i,j\in\Siblings(k)$), and by $\Wmat^k_{i,j}$ if $j\in\NodeSet$ and $i\in\NodeSet$ are respectively a parent and child of $k\in\NodeSet$ (i.e., $j=\Parent(k)$ and $i\in\Children(k)$).
    \item An \emph{edge-to-output} operator $\Out^k_i$ is denoted by $\Cmat^k_i$ if $i\in\NodeSet$ is a \emph{child} of $k\in\NodeSet$ (i.e., $i\in\Children(k)$), and by $\Qmat^k_i$ if $i\in\NodeSet$ is a \emph{parent} of $k\in\NodeSet$ (i.e., $i=\Parent(k)$).
\end{itemize}
The input-to-output, input-to-edge, edge-to-edge, and edge-to-output operators can systematically be organized in so-called \emph{spinner matrices} or transition maps at every node $k\in\NodeSet$. Let $i_1,i_2,\ldots,i_p\in\Children(k)$ be an enumeration of the set of children and $j=\Parent(k)$  the parent node (see \Cref{fig:spinnerillustration}). At every node $k\in\NodeSet$, the following relations must be satisfied
\begin{equation}
      \begin{bmatrix} \state{(k,i_1)} \\ \state{(k,i_2)} \\  \vdots  \\ \state{(k,i_p)} \\  \state{(k,j)} \\ \vect{b}_k \end{bmatrix} = \begin{bmatrix}
 0 & \Vmat^{k}_{i_1,i_2}  & \cdots & \Vmat^{k}_{i_1,i_p} &  \Wmat^{k}_{i_1,j}  & \Pmat^{k}_{i_1}   \\
 \Vmat^{k}_{i_2,i_1}    & 0 & & \Vmat^{k}_{i_2,i_p}  & \Wmat^{k}_{i_2,i_1}    &  \Pmat^{k}_{i_2}  \\
\vdots &  \vdots    &  & \ddots & \vdots & \vdots      \\
 \Vmat^{k}_{i_p,i_1}    & \Vmat^{k}_{i_p,i_2}  & \cdots & 0 & \Wmat^{k}_{i_p,j}    &  \Pmat^{k}_{i_p}  \\
 \Umat^{k}_{j,i_i}    & \Umat^{k}_{j,i_2} & \cdots & \Umat^{k}_{j,i_p} & 0 & \Bmat^k_j   \\
 \Cmat^{k}_{i_1} & \Cmat^{k}_{i_2} & \cdots & \Cmat^{k}_{i_p} &  \Qmat^{k}_{j} & \Dmat^k  
\end{bmatrix}   \begin{bmatrix} \state{(i_1,k)} \\ \state{(i_2,k)} \\  \vdots  \\ \state{(i_p,k)} \\  \state{(j,k)} \\ \vect{x}_k \end{bmatrix}  \label{eq:transitionmaps}
\end{equation}
The spinner matrices of the TQS representation are
\begin{equation}
      \Spinner^{k} := \begin{blockarray}{ccccccc}
& i_1 & i_2& \cdots & i_p & j & \Inp   \\
\begin{block}{c[cccccc]} 
i_1 & 0 & \Vmat^{k}_{i_1,i_2}  & \cdots & \Vmat^{k}_{i_1,i_p} &  \Wmat^{k}_{i_1,j}  & \Pmat^{k}_{i_1}   \\
i_2 & \Vmat^{k}_{i_2,i_1}    & 0 & & \Vmat^{k}_{i_2,i_p}  & \Wmat^{k}_{i_2,i_1}    &  \Pmat^{k}_{i_2}  \\
\vdots &  \vdots    &  & \ddots & \vdots & \vdots    &  \vdots  \\
i_p & \Vmat^{k}_{i_p,i_1}    & \Vmat^{k}_{i_p,i_2}  & \cdots & 0 & \Wmat^{k}_{i_p,j}    &  \Pmat^{k}_{i_p}  \\
j & \Umat^{k}_{j,i_i}    & \Umat^{k}_{j,i_2} & \cdots & \Umat^{k}_{j,i_p} & 0 & \Bmat^k_j   \\
\Out & \Cmat^{k}_{i_1} & \Cmat^{k}_{i_2} & \cdots & \Cmat^{k}_{i_p} &  \Qmat^{k}_{j} & \Dmat^k  \\
\end{block}
\end{blockarray}, \qquad k\in\NodeSet.
\end{equation}
The spinner matrices reveal how many numerical entries are involved in a TQS matrix. Specifically, this number equals
\begin{displaymath}
   \sum_{i\in\NodeSet} \left( m_i + \sum_{j \in \kNeighbor(i)} r_{(i,j)}  \right)  \left( n_i + \sum_{j \in \kNeighbor(i)} r_{(j,i)}  \right)  -  \sum_{j \in \kNeighbor(i)} r_{(i,j)} r_{(j,i)}. 
\end{displaymath}
To illustrate our definition with an example, consider the following tree
    \begin{equation*}
    \begin{tikzpicture}
     \node at (-1.2,0) {$\Graph_a(4):$};
     \node[circle, draw, fill=black, inner sep=1.5pt] (myNodeA) at (2,0.5) {};
  \node[right] at (myNodeA.north) {$\pmb{\bm{4}}$};
  \node[circle, draw, fill=black, inner sep=1.5pt] (myNodeB) at (1,0.5) {};
  \node[above] at (myNodeB.center) {$3$};
   \node[circle, draw, fill=black, inner sep=1.5pt] (myNodeC) at (0,0.5) {};
  \node[left] at (myNodeC.north) {$1$}; 
    \node[circle, draw, fill=black, inner sep=1.5pt] (myNodeD) at (1,-0.5) {};
  \node[right] at (myNodeD.south) {$2$}; 
    \draw (myNodeA) -- (myNodeB);
    \draw (myNodeB) -- (myNodeC);
    \draw (myNodeB) -- (myNodeD);
\end{tikzpicture} 
\end{equation*}
with node $4$ (highlighted in bold) designated to be the root node. The TQS matrix for the corresponding tree $\Graph_a(4)$
has the structure
\begin{displaymath}
\TQS = \begin{blockarray}{ccccc}
& 1 & 2  & 3 & 4  \\
\begin{block}{c[cccc]} 
1 & \Dmat^1 &   \Qmat^1_3 \Vmat^3_{1,2} \Bmat^2_3   &  \Qmat^1_3 \Pmat^3_1  & \Qmat^1_3 \Wmat^3_{1,4} \Pmat^4_3  \\
 2 & \Qmat^2_3 \Vmat^3_{2,1} \Bmat^1_3 & \Dmat^2 &  \Qmat^2_3 \Pmat^3_{2} & \Qmat^2_3 \Wmat^3_{2,4}  \Pmat^4_3 \\
3 & \Cmat^3_1 \Bmat^1_3 &  \Cmat^3_2 \Bmat^2_3  & \Dmat^3 & \Qmat^3_4 \Pmat^4_3   \\
4 & \Cmat^4_3 \Umat^3_{4,1} \Bmat^1_3 & \Cmat^4_3 \Umat^3_{4,2} \Bmat^2_3   &  \Cmat^4_3 \Bmat^3_4 &  \Dmat^4  \\
\end{block}
\end{blockarray}.
\end{displaymath}
The spinner matrices take on the form
\begin{displaymath}
\Spinner^{1} = \begin{blockarray}{ccc}
& \mathbf{3} & \Inp  \\
\begin{block}{c[cc]} 
\mathbf{3} & 0 & \Bmat^{1}_3 \\
 \Out & \Qmat^{1}_3  & \Dmat^{1}  \\
\end{block}
\end{blockarray}, \,  \Spinner^{2} = \begin{blockarray}{ccc}
 & \mathbf{3} & \Inp  \\
\begin{block}{c[cc]} 
\mathbf{3} & 0 &  \Bmat^{2}_3 \\
\Out & \Qmat^{2}_3  & \Dmat^2  \\
\end{block}
\end{blockarray}, \,     \Spinner^{4} = \begin{blockarray}{ccc}
&  3 &   \Inp \\
\begin{block}{c[cc]} 
3 & 0  &      \Pmat^{4}_3   \\
\Out & \Cmat^{4}_3 &  \Dmat^4   \\
\end{block}
\end{blockarray},
\end{displaymath}
\begin{equation*}
    \Spinner^{3} = \begin{blockarray}{ccccc}
& 1& 2& \mathbf{4} & \Inp  \\
\begin{block}{c[cccc]} 
1 & 0 & \Vmat^{3}_{1,2}   &  \Wmat^{3}_{1,4}  & \Pmat^{3}_1  \\
2 & \Vmat^{3}_{2,1}    & 0 & \Wmat^{3}_{2,4}    &  \Pmat^{3}_2  \\
\mathbf{4} & \Umat^{3}_{4,1}    & \Umat^{3}_{4,2}    & 0 & \Bmat^3_4  \\
\Out & \Cmat^{3}_1 & \Cmat^{3}_2 & \Qmat^{3}_4  & \Dmat^4  \\
\end{block}
\end{blockarray}, 
\end{equation*}
where the parent nodes of each spinner matrix are highlighted in bold.

\begin{figure}
    \centering
   \begin{tikzpicture}
  \node[circle, draw, fill=black, inner sep=1.5pt] (myNode) at (0,0) {};
  \node[right] at (myNode.east) {$k$}; 
  \draw[-] (myNode) -- ++(90:1.2cm) node[midway, above, pos=1.0] {$(k,j)$};
  \draw[-] (myNode) -- ++(215:2.25cm) node[midway, below, pos=1.0] {$(k,i_1)$};
    \draw[-] (myNode) -- ++(240:1.5cm) node[midway, below, pos=1.0] {$(k,i_2)$};
  \draw[-] (myNode) -- ++(325:2.25cm) node[midway, below, pos=1.0] {$(k,i_p)$};
  \node at (.5,-1) {\ldots};
\end{tikzpicture}
    \caption{Node $k\in\NodeSet$ with its children $i_1,i_2,\ldots,i_p \in \Children(k)$ and parent $j = \Parent(k)$.}
    \label{fig:spinnerillustration}
\end{figure}

\subsection{HSS and SSS matrices as special subcases} \label{sec:HSSandSSSandTQS}
While SSS and HSS matrices have their origins in disparately different fields. TQS matrices bring them under one roof as it includes SSS matrices and HSS matrices as special subcases. To further elaborate on this, consider line graph
  \begin{equation*}
       \begin{tikzpicture}
     \node at (-1.2,0) {$\Graph_b(4):$};
     \node[circle, draw, fill=black, inner sep=1.5pt] (myNodeA) at (0,0) {};
  \node[left] at (myNodeA.center) {$1$};
  \node[circle, draw, fill=black, inner sep=1.5pt] (myNodeB) at (1,0) {};
  \node[above] at (myNodeB.center) {$2$};
   \node[circle, draw, fill=black, inner sep=1.5pt] (myNodeC) at (2,0) {};
  \node[above] at (myNodeC.center) {$3$}; 
    \node[circle, draw, fill=black, inner sep=1.5pt] (myNodeD) at (3,0) {};
  \node[right] at (myNodeD.center) {$\pmb{\bm{4}}$}; 
    \draw (myNodeA) -- (myNodeB);
    \draw (myNodeB) -- (myNodeC);
    \draw (myNodeC) -- (myNodeD);
\end{tikzpicture}  
\end{equation*}
with node 4 chosen as the root node. The spinner matrices associated with $\mathbb{G}_b(4)$ are of the form
\begin{displaymath}
    \Spinner^{1} = \begin{blockarray}{ccc}
& \mathbf{2} & \Inp  \\
\begin{block}{c[cc]} 
\mathbf{2} & 0 & \Bmat^{1}_2 \\
 \Out & \Qmat^{1}_2  & \Dmat^{1}  \\
\end{block}
\end{blockarray}, \quad \Spinner^{2} = \begin{blockarray}{cccc}
& 1 & \mathbf{3} & \Inp  \\
\begin{block}{c[ccc]} 
1 & 0 &  \Wmat^{2}_{1,3}     & \Pmat^{2}_1 \\
\mathbf{3} &  \Umat^{2}_{3,1}        & 0 & \Bmat^{2}_3 \\
 \Out & \Cmat^{2}_1   &  \Qmat^{2}_3  & \Dmat^{2}  \\
\end{block}
\end{blockarray},
\end{displaymath}
\begin{displaymath}
  \Spinner^{3} = \begin{blockarray}{cccc}
& 2 & \mathbf{4} & \Inp  \\
\begin{block}{c[ccc]} 
2 & 0 &  \Wmat^{3}_{2,4}     & \Pmat^{3}_2 \\
\mathbf{4} &  \Umat^{3}_{4,2}        & 0 & \Bmat^{2}_4 \\
 \Out & \Cmat^{3}_2  &  \Qmat^{3}_4  & \Dmat^{3}  \\
\end{block}
\end{blockarray}, \quad  \Spinner^{4} = \begin{blockarray}{ccc}
& 3 & \Inp  \\
\begin{block}{c[cc]} 
3 & 0 & \Pmat^{4}_3 \\
 \Out & \Cmat^{4}_3  & \Dmat^{4}  \\
\end{block}
\end{blockarray},
\end{displaymath}
and generate the SSS matrix shown in \eqref{eq:SSSSn4}. In general, all linearly ordered line graphs produce a TQS matrix of the type described in \Cref{sec:SSS} if the final node is chosen as the root node.

HSS matrices, on the other hand, are TQS matrices associated with binary trees. For example, consider the post-ordered binary tree
 \begin{equation*}
    \begin{tikzpicture}
     \node at (-1.2,0) {$\Graph_c(5):$};
     \node[circle, draw, fill=black, inner sep=1.5pt] (myNodeA) at (0,0.5) {};
  \node[left] at (myNodeA.center) {$1$};
  \node[circle, draw, fill=black, inner sep=1.5pt] (myNodeB) at (1,0.5) {};
  \node[above] at (myNodeB.center) {$3$};
   \node[circle, draw, fill=black, inner sep=1.5pt] (myNodeC) at (2,0.5) {};
  \node[right] at (myNodeC.north) {$\pmb{\bm{5}}$}; 
    \node[circle, draw, fill=black, inner sep=1.5pt] (myNodeD) at (2,-0.5) {};
  \node[right] at (myNodeD.center) {$4$}; 
      \node[circle, draw, fill=black, inner sep=1.5pt] (myNodeE) at (1,-0.5) {};
  \node[right] at (myNodeE.center) {$2$}; 
    \draw (myNodeA) -- (myNodeB);
    \draw (myNodeB) -- (myNodeC);
    \draw (myNodeC) -- (myNodeD);
     \draw (myNodeB) -- (myNodeE);
\end{tikzpicture}  
\end{equation*}
with node $5$ as the root node. Furthermore, suppose that $n_3=m_3=n_5=m_5=0$, i.e., all the non-leaf nodes are ``empty nodes'' with \emph{zero} input and output dimensions. The spinner matrices of $\mathbb{G}_c(5)$ are of the form
\begin{displaymath}
\Spinner^{1} = \begin{blockarray}{ccc}
& \mathbf{3} & \Inp  \\
\begin{block}{c[cc]} 
\mathbf{3} & 0 & \Bmat^{1}_3 \\
 \Out & \Qmat^{1}_3  & \Dmat^{1}  \\
\end{block}
\end{blockarray},\quad  \Spinner^{2} = \begin{blockarray}{ccc}
 & \mathbf{3} & \Inp  \\
\begin{block}{c[cc]} 
\mathbf{3} & 0 &  \Bmat^{2}_3 \\
\Out & \Qmat^{2}_3  & \Dmat^2  \\
\end{block}
\end{blockarray}, \quad     \Spinner^{4} = \begin{blockarray}{ccc}
&  \mathbf{5} &   \Inp \\
\begin{block}{c[cc]} 
\mathbf{5} & 0 &      \Bmat^{4}_5   \\
\Out & \Qmat^{4}_5 &  \Dmat^4   \\
\end{block}
\end{blockarray},
\end{displaymath}
\begin{displaymath}
     \Spinner^{3} = \begin{blockarray}{ccccc}
& 1 & 2 & \mathbf{5} & \Inp  \\
\begin{block}{c[cccc]} 
1 & 0  &  \Vmat^{3}_{1,2}  & \Wmat^{3}_{1,5} & | \\
2 &  \Vmat^{3}_{2,1}  & 0 &  \Wmat^{3}_{2,5}  & | \\
\mathbf{5} &  \Umat^{3}_{5,1}  &  \Umat^{3}_{5,2}    & 0 & | \\
 \Out & - & -  &  -  & \cdot  \\
\end{block}
\end{blockarray}, \quad   \Spinner^{5} = \begin{blockarray}{ccccc}
& 3 & 4 &  \Inp  \\
\begin{block}{c[cccc]} 
3 & 0  &  \Vmat^{5}_{3,4}  &  | \\
4 &  \Vmat^{5}_{4,3}  & 0 &  | \\
 \Out & -  & -  &  \cdot  \\
\end{block}
\end{blockarray},
\end{displaymath}
and generate the HSS matrix in \eqref{eq:HSSexample}. In general, all post-ordered binary tree graphs produce a TQS matrix of the type described in \Cref{sec:HSS}, provided that all non-leaf nodes are empty nodes of zero dimensions.

In all other cases, the TQS matrices are neither SSS nor HSS. The example of the previous section and the upcoming example in \Cref{sec:constructionexample} belong to this category.

\subsection{TQS representations of sparse matrices}  \label{sec:sparsematrices}
 The construction (or realization) of a TQS representation will be treated in \Cref{sec:construction}, however, for sparse matrices whose adjacency graph is a tree, the TQS representation can be written down directly from inspection. For example, consider the family of sparse matrices whose adjacency graph corresponds to a regular $k$-level binary tree (see \Cref{fig:level3regularbinary}). Using a ``nested dissection''-styled ordering, we obtain the sequence of matrices
 $$
 \newcommand*{\tempb}{}
\TQS_1 =  [d^1], \quad \TQS_2 = \left[
\begin{array}{c|cc}
d^1 &  &  p^3_{1} \\ \hline
 & \multicolumn{1}{c|}{d^2 }  &  p^3_{2} \\ \cline{2-3}
b^1_{3} & \multicolumn{1}{c|}{ b^2_{3}}  & d^3
\end{array}\right], \quad \TQS_3 = \left[
\begin{array}{ccc|cccc}
\multicolumn{1}{c|}{d^1}  &  &  p^3_{1} \\  \cline{1-3}
\multicolumn{1}{c|}{ }  & \multicolumn{1}{c|}{d^2 } &  p^3_{2} \\  \cline{2-3}
\multicolumn{1}{c|}{b^1_{3}}  &  \multicolumn{1}{c|}{ b^2_{3}}  & d^3  &      &      &          &  p^7_{3} \\ \hline
       &         &      & \multicolumn{1}{c|}{d^4}    &   \multicolumn{1}{c|}{}    & p^6_{4}    &  \\  \cline{4-5}
       &         &      &   \multicolumn{1}{c|}{}    &  \multicolumn{1}{c|}{d^5}  & p^6_{5}   &   \\  \cline{4-7}
       &         &      &  b^4_{6} &  \multicolumn{1}{c|}{b^5_{6}}  &  \multicolumn{1}{c|}{d^6}  & p^7_{6} \\   \cline{6-7}
       &         &   b^3_{7}    &       &  \multicolumn{1}{c|}{}      &  \multicolumn{1}{c|}{b^6_{7}}   & d^7  \\ 
\end{array}\right], 
$$
$$
\TQS_4 = \scalebox{0.95}{$\left[
\begin{array}{ccccccc|cccccccc}
\multicolumn{1}{c|}{d^1}  &  &  \multicolumn{1}{c|}{p^3_{1}} &  & & & & &\\  \cline{1-3}
\multicolumn{1}{c|}{ }  & \multicolumn{1}{c|}{d^2 } &  \multicolumn{1}{c|}{p^3_{2}} &  & & & & & & \\  \cline{2-3}
\multicolumn{1}{c|}{b^1_{3}}  &  \multicolumn{1}{c|}{ b^2_{3}}  & \multicolumn{1}{c|}{d^3}  &      &      &          &  p^7_{3} & \\ \cline{1-7}
       &         &   \multicolumn{1}{c|}{}   & \multicolumn{1}{c|}{d^4}    &   \multicolumn{1}{c|}{}    & p^6_{4}    &  & \\  \cline{4-5}
       &         &    \multicolumn{1}{c|}{}  &   \multicolumn{1}{c|}{}    &  \multicolumn{1}{c|}{d^5}  & p^6_{5}   &  
 & \\  \cline{4-7}
       &         &  \multicolumn{1}{c|}{}     &  b^4_{6} &  \multicolumn{1}{c|}{b^5_{6}}  &  \multicolumn{1}{c|}{d^6}  & p^7_{6}  &\\   \cline{6-7}
       &         &  \multicolumn{1}{c|}{ b^3_{7} }   &       &  \multicolumn{1}{c|}{}      &  \multicolumn{1}{c|}{b^6_{7}}   & d^7  &  & &  & & & & &  p^{15}_{7}  \\ \hline
& & & & & & &   \multicolumn{1}{c|}{ d^8} & \multicolumn{1}{c|}{ } &  p^{10}_{8} & \multicolumn{1}{c|}{ } &    \\   \cline{8-9}
& & & & & & &  \multicolumn{1}{c|}{} & \multicolumn{1}{c|}{ d^9} &  p^{10}_{9} & \multicolumn{1}{c|}{ } \\  \cline{8-11}
& & & & & & & b^8_{10} & \multicolumn{1}{c|}{ b^{9}_{10}}  & \multicolumn{1}{c|}{ d^{10} } & \multicolumn{1}{c|}{ }      &      &        &  p^{14}_{10} \\  \cline{10-11}
& & & & & & &        &  \multicolumn{1}{c|}{ }        &    \multicolumn{1}{c|}{}   & \multicolumn{1}{c|}{ d^{11} }  &      & p^{13}_{11}    &  \\ \cline{8-15}
& & & & & & &        &         &      &  \multicolumn{1}{c|}{  }    & \multicolumn{1}{c|}{d^{12}}  & \multicolumn{1}{c|}{ p^{13}_{12} }   &   \\  \cline{12-13}
& & & & & & &        &         &      &  \multicolumn{1}{c|}{ b^{11}_{13}}  & \multicolumn{1}{c|}{b^{12}_{13}}  & \multicolumn{1}{c|}{ d^{13} }  & b^{14}_{13}  \\  \cline{12-15}
& & & & & & &        &         &    b^{10}_{14}    &  \multicolumn{1}{c|}{}    &       & \multicolumn{1}{c|}{ b^{13}_{14} }  & \multicolumn{1}{c|}{d^{14}}  & p^{15}_{14} \\   \cline{14-15}
& & & & & & b^7_{15} &        &         &        &   \multicolumn{1}{c|}{}      &       &   \multicolumn{1}{c|}{ }  &  \multicolumn{1}{c|}{b^{14}_{15}}  & d^{15}  \\ 
\end{array}\right]$} ,
$$
$\ldots$, etc. The matrices $\TQS_k$ (with $k=1,2,\ldots$) may be compressed with an HSS representation using the hierarchical partitioning delineated above. This unfortunately leads to an $O(k)$ growth in the dimensions of the HSS generators. The same growth will also occur if one opts for an SSS representation. Since $k \sim \log K$, with $K$ denoting the number of nodes, SSS and HSS will not permit a linear parameterization of $\TQS_k$ under this ordering\footnote{Note that the optimal ordering is unknown!}. However, a linear parameterization could be obtained if one chooses a TQS representation with the adjacency graph as the corresponding tree. Doing so, the \textit{root node} and \textit{leaf nodes} will have the spinner matrices\footnote{Recall that $\leftchild(l)$ (respectively, $\rightchild(l)$) denote the left (respectively, right) child of node $l$ a binary tree. }
\begin{displaymath}
\Spinner^r =   \begin{blockarray}{cccc}
& \leftchild(r) & \rightchild(r) & \Inp  \\
\begin{block}{c[ccc]} 
\leftchild(r) & 0 &  0     & p^{r}_{\leftchild(r)} \\
\rightchild(r) &  0      & 0 &  p^{r}_{\rightchild(r)} \\
 \Out & 1  &  1 & d^r  \\
\end{block}
\end{blockarray}   \quad \text{and} \quad \Spinner^{i} = \begin{blockarray}{ccc}
& \bm{\Parent(i)} & \Inp  \\
\begin{block}{c[cc]} 
 \bm{\Parent(i)} & 0 & b^{i}_{\Parent(i)} \\
 \Out & 1  & d^{i}  \\
\end{block}
\end{blockarray}
\end{displaymath}
respectively, while the \textit{interior nodes} (i.e., non-leaf and non-root nodes) take on form
\begin{displaymath}
  S_i =  \begin{blockarray}{ccccc}
& \leftchild(i) & \rightchild(i) & \bm{\Parent(i)} & \Inp  \\
\begin{block}{c[cccc]} 
\leftchild(i) & 0  & 0  & 0 & p^i_{\leftchild(i)} \\
\rightchild(i) &  0  & 0 &  0  & p^i_{\rightchild(i)} \\
\bm{\Parent(i)} &  0  &  0    & 0 &  b^i_{\Parent(i)} \\
 \Out & 1 & 1  & 1 & d^i  \\
\end{block}
\end{blockarray}.
\end{displaymath}
Notice that all the edge-to-edge operators are \emph{zero} while the edge-to-output operators are equal to \emph{one}. As we shall see in \Cref{prop:TQSinverse} in \Cref{sec:TQSalgebra}, the inverse of $\TQS_k$, which is (typically) a dense matrix, will also have a TQS representation of exactly the same dimensions. The edge-to-edge operators will no longer be zero.

It should be noted that a TQS representation with the adjacency graph as the corresponding tree will \emph{not} always yield a compact representation. A canonical example is the $K$-by-$K$ arrowhead matrix
\begin{displaymath}
    \begin{bmatrix}
    d^1 & & &    & p^{K}_1\\
     & d^2 &  &    & p^{K}_2 \\
     &     & \ddots  &   & \vdots \\
      &     &   &     d^{K-1} & p^K_{K-1}  \\
      b^{1}_K    &   b^{2}_K  & \cdots       & b^{K-1}_K & d^{K}  \\
    \end{bmatrix}.
\end{displaymath}
The TQS representation for the arrowhead matrix will involve roughly the same number of parameters as the dense matrix itself, while an SSS (or HSS) representation yields a linear parameterization. In general, one can expect a TQS representation to be efficient if
\begin{equation}
\rho_{\max} := \max_{e\in\EdgeSet} \weight{e}\ll M,N, \quad   \degnode_{\max} :=  \max_{i\in\NodeSet} \degnode(i) \ll K, \quad M,N \sim K. \label{eq:linearcomplexity}
\end{equation}

\begin{figure}
    \centering
\begin{tikzpicture}[
  level distance=1.5cm,
  level 1/.style={sibling distance=4cm},
  level 2/.style={sibling distance=2cm},
  level 3/.style={sibling distance=1cm},
  edge from parent/.style={draw}
]
\node[circle, draw, fill=black, inner sep=1.5pt] (root) {}
  child {node[circle, draw, fill=black, inner sep=1.5pt] (l1) {}
    child {node[circle, draw, fill=black, inner sep=1.5pt] (l11) {}
      child {node[circle, draw, fill=black, inner sep=1.5pt] (l111) {}}
      child {node[circle, draw, fill=black, inner sep=1.5pt] (l112) {}}
    }
    child {node[circle, draw, fill=black, inner sep=1.5pt] (l12) {}
      child {node[circle, draw, fill=black, inner sep=1.5pt] (l121) {}}
      child {node[circle, draw, fill=black, inner sep=1.5pt] (l122) {}}
    }
  }
  child {node[circle, draw, fill=black, inner sep=1.5pt] (r1) {}
    child {node[circle, draw, fill=black, inner sep=1.5pt] (r11) {}
      child {node[circle, draw, fill=black, inner sep=1.5pt] (r111) {}}
      child {node[circle, draw, fill=black, inner sep=1.5pt] (r112) {}}
    }
    child {node[circle, draw, fill=black, inner sep=1.5pt] (r12) {}
      child {node[circle, draw, fill=black, inner sep=1.5pt] (r121) {}}
      child {node[circle, draw, fill=black, inner sep=1.5pt] (r122) {}}
    }
  };

\node[above] at (root.north) {$\pmb{\bm{15}}$};
\node[above left] at (l1) {$7$};
\node[above left] at (l11) {$3$};
\node[below] at (l111) {$1$};
\node[below] at (l112) {$2$};
\node[above right] at (l12) {$6$};
\node[below] at (l121) {$4$};
\node[below] at (l122) {$5$};
\node[above right] at (r1) {$14$};
\node[above left] at (r11) {$10$};
\node[below] at (r111) {$8$};
\node[below] at (r112) {$9$};
\node[above right] at (r12) {$13$};
\node[below] at (r121) {$11$};
\node[below] at (r122) {$12$};
\end{tikzpicture}
    \caption{A level $k=3$ binary tree with a ``nested dissection''-styled ordering of the nodes.}
    \label{fig:level3regularbinary}
\end{figure}

\section{Algebraic properties of TQS matrices}  \label{sec:TQSalgebraic}
This section discusses important algebraic properties of TQS matrices. \Cref{sec:GIRSTQS} describes the graph-induced rank structure (GIRS) property of TQS matrices.  \Cref{sec:universality} introduces the theorem that characterizes the minimal TQS representation for any graph-partitioned matrix whose graph is a tree. \Cref{sec:TQSalgebra} discusses the properties of TQS matrices under addition, products, and inversion. 

\subsection{Graph-induced rank structure of TQS matrices} \label{sec:GIRSTQS}

Inverses of tridiagonal matrices are examples of dense matrices whose off-diagonal blocks possess low-rank qualities. For graph-partitioned matrices $(\TQS,\Graph)$, this property of off-diagonal low rank can be expressed in a more general framework through the notion of \emph{graph-induced rank structure} (GIRS). To describe GIRS, we first introduce the concept of a Hankel block. Let $\NodeSubset{A} \subset \NodeSet$ and let $\bar{\NodeSubset{A}} = \NodeSet \setminus \NodeSubset{A}$ denote its complement. We may (block-)permute $\TQS$ such that
\begin{displaymath}
    \mat{\Pi}_1 \TQS \mat{\Pi}_2 = \begin{bmatrix}
    \TQS\nodeid{\NodeSubset{A}, \NodeSubset{A}} &  \TQS\nodeid{\NodeSubset{A}, \bar{\NodeSubset{A}}}   \\
     \TQS\nodeid{\bar{\NodeSubset{A}}, \NodeSubset{A}}  & \TQS\nodeid{\bar{\NodeSubset{A}}, \bar{\NodeSubset{A}}}
    \end{bmatrix}.
\end{displaymath}
We call  $\TQS\nodeid{\bar{\NodeSubset{A}}, \NodeSubset{A}}$ the \emph{Hankel block induced by $\NodeSubset{A}$}. An edge $(i,j) \in \EdgeSet$ is called a \emph{border edge} with respect to $\NodeSubset{A}$  if $i\in \NodeSubset{A}$ and $j\in \bar{\NodeSubset{A}}$. The number of border edges, or the \emph{edge count}, induced by $\NodeSubset{A}$ is denoted $\EdgeCount(\NodeSubset{A})$. The GIRS property is defined as follows.
\begin{definition}[GIRS-property]
    The pair $(\TQS,\Graph)$ is said to satisfy the graph-induced (low-)rank structure for a constant $c\geq 0$ if $\forall \NodeSubset{A} \subset \NodeSet$, we have
    \begin{displaymath}
        \rank \TQS\nodeid{\bar{\NodeSubset{A}}, \NodeSubset{A}}  \leq c \EdgeCount(\NodeSubset{A}).
    \end{displaymath}
\end{definition}
TQS matrices turn out to satisfy a GIRS-property. To see this, one must exploit the direct correspondence between the path followed from node $j\in\NodeSet$ to $i\in\NodeSet$ in $\Graph$ and the associated matrix expression at block-entry $\TQS\nodeid{i,j}$. 
%
%
We may prove the following result.
\begin{proposition} \label{prop:TQSisGIRS} A TQS matrix $\TQS\in\field^{M\times N}$ with rank-profile $\lbrace \weight{e} \rbrace_{e\in \EdgeSet}$ satisfies the GIRS-property for $c=\max_{e\in \EdgeSet} \weight{e}$.
\end{proposition}
\begin{proof}
Let $\NodeSubset{A} \subset \NodeSet$ and $\bar{\NodeSubset{A}} = \NodeSet \setminus \NodeSubset{A}$. We must show that the rank of $\TQS\nodeid{\bar{\NodeSubset{A}}, \NodeSubset{A}}$ is bounded by $\EdgeCount\left( \NodeSubset{A} \right) \cdot \max_{e\in \EdgeSet} \weight{e}$. Suppose $i\in \NodeSubset{A}$ and $j\in \bar{\NodeSubset{A}}$. Since $\Graph$ is acyclic, there exists a unique path $\Path(i,j)$ connecting $i$ to $j$. By construction, $\Path(i,j)$ first starts at a node in $\NodeSubset{A}$ to eventually transition into a node in $\bar{\NodeSubset{A}}$. It may be possible that $\Path(i,j)$ leaves and enters $\NodeSubset{A}$ multiple times before it finally enters back into $\bar{\NodeSubset{A}}$ one last time to reach node $j$. Write 
$$\Path(i,j) = i-\cdots-s-t-\cdots-v-w-\cdots-j,$$
where $(s,t)\in \EdgeSet$ (with $s\in \NodeSubset{A}$, $t \in \bar{\NodeSubset{A}}$) marks the first time $\Path(i,j)$ entering $\bar{\NodeSubset{A}}$ and $(v,w)\in \EdgeSet$ (with $s\in \NodeSubset{A}$, $t \in \bar{\NodeSubset{A}}$) marks the last time  $\Path(i,j)$ leaving  $\NodeSubset{A}$. We may factor
\begin{equation}
    \TQS\nodeid{j, i} =  \mat{\Gamma}^{j}_{(v,w)} \mat{\Phi}_{(v,w),(s,t)}    \mat{\Psi}^{i}_{(s,t)}  \label{eq:keyproperty}
\end{equation}
where
\begin{eqnarray*}
   \mat{\Psi}^{i}_{(s,t)}   & := & \begin{cases}
   \Inp^s_t  & i=s \\
    \Trans^s_{t,*} \cdots  \Trans^*_{*,j} \Inp^i_*  & i\ne s
 \end{cases},  \\
\mat{\Phi}_{(v,w),(s,t)}    & := & \begin{cases}
   \Id & (s,t)=(v,w) \\
\Trans^v_{w,*} \cdots  \Trans^t_{*,s}  &  (s,t) \ne (v,w) 
 \end{cases}, \\
   \mat{\Gamma}^{j}_{(v,w)}   & := & \begin{cases}
   \Out^w_v  & j=w \\
   \Out^j_* \Trans^*_{j,*} \cdots  \Trans^v_{*,w}  & j\ne t
 \end{cases}. 
\end{eqnarray*}
 Let $\lbrace e_i \rbrace^{\EdgeCount(A)}_{i=1}\subset \EdgeSet$ denote the set of border edges, and 
\begin{eqnarray*}
  \Phi :=  \begin{bmatrix}
          \Phi_{e_1,e_1}  &  \cdots & \Phi_{e_{1},e_{\EdgeCount(A)}}  \\
        \vdots &     & \vdots \\
        \Phi_{e_{\EdgeCount(A)},e_1}   & \cdots & \Phi_{e_{\EdgeCount(A)},e_{\EdgeCount(A)}}  
    \end{bmatrix}.
\end{eqnarray*}
Given \eqref{eq:keyproperty}, it becomes evident that we may factor $\TQS\nodeid{\bar{\NodeSubset{A}}, \NodeSubset{A}} = \mat{\Gamma} \Phi \mat{\Psi}$. Thus,
\begin{displaymath}
    \rank\TQS\nodeid{\bar{\NodeSubset{A}}, \NodeSubset{A}} = \sum^{ \EdgeCount(\NodeSubset{A})}_{i=1} \weight{e_i} \leq  \EdgeCount\left( \NodeSubset{A} \right) \cdot \max_{e\in \EdgeSet} \weight{e}.  
\end{displaymath}
\end{proof}

\subsection{Minimal TQS representations} \label{sec:universality}
\Cref{prop:TQSisGIRS} shows that TQS matrices satisfy the GIRS-property for $c=\max_{e\in \EdgeSet} \weight{e}$. A question arises as to whether the converse also holds. If a tree-graph-partitioned matrix satisfies the GIRS-property for $c>0$, does this then imply the existence of a TQS representation whose rank-profile is bounded by the GIRS constant? For SSS and HSS matrices both these questions can be answered in the affirmative. Interestingly, the same holds also for the more general TQS matrices. The answer this question, one must study the problem of constructing a minimal TQS representation.
\begin{definition}[minimal TQS representation]
      Let $(\TQS,\Graph)$ be a graph-partitioned matrix with $\Graph$  acyclic and connected. A TQS representation for $\TQS$ with rank-profile $\lbrace \rho_{e} \rbrace_{e\in\EdgeSet}$ is called minimal if any other TQS representation for $\TQS$ with rank-profile $\lbrace \rho'_{e} \rbrace_{e\in\EdgeSet}$ satisfies $\rho'_{e} \geq \weight{e}$ for all $e\in\EdgeSet$. 
\end{definition}

Given a graph-partitioned matrix $(\TQS,\Graph)$ with $G$ acyclic and connected, the rank-profile of the corresponding TQS representation can be derived from the ranks of Hankel blocks whose edge count is unity. For a tree, these so-called \emph{unit Hankel blocks} are quite straightforward to enlist as every edge $(i,j)\in \EdgeSet$ can be uniquely paired with one such unit Hankel blocks. Indeed, once a root node $r\in\NodeSet$ for the graph has been picked, it must hold that either $j=\Parent(i)$ or $i=\Parent(j)$ for the corresponding tree $\mathbb{G}(r)$. Let
\begin{equation*}
    \NodeSubset{H}_{(i,j)} = \begin{cases}
      \Descendants(i) & j=\Parent(i) \\
        \NodeSet \, \setminus  \Descendants(j) & i=\Parent(j) 
    \end{cases}
\end{equation*}
and let $\UnitHankelBlock{(i,j)}$ denote the Hankel block corresponding with subset $\NodeSubset{H}_{(i,j)}$, i.e., $\UnitHankelBlock{(i,j)} = \TQS\nodeid{\NodeSubset{H}_{(i,j)} , \bar{\NodeSubset{H}}}_{(i,j)}$ with $\bar{\NodeSubset{H}}_{(i,j)} = \NodeSet \setminus \NodeSubset{H}_{(i,j)}$. We have the following result.

\begin{theorem} \label{thm:universality}
  Let $(\TQS,\Graph)$ be a graph-partitioned matrix with $\Graph$ acyclic and connected. Then $\TQS\in\field^{M\times N}$ admits an TQS representation with rank-profile 
  \begin{displaymath}
      \weight{e} = \rank \UnitHankelBlock{e}, \qquad e\in \EdgeSet.
  \end{displaymath}
  Furthermore, such a TQS representation is minimal.
\end{theorem}
The proof of \Cref{thm:universality} is postponed to \Cref{sec:proofuniversality} where we shall introduce an explicit algorithm to convert a dense matrix into a TQS representation.
\Cref{prop:TQSisGIRS} and \Cref{thm:universality} yields the following corollary.   
\begin{corollary}
      Let $(\TQS,\Graph)$ be a graph-partitioned matrix with $\Graph$  acyclic and connected. Then $\TQS\in\field^{M\times N}$ satisfies the GIRS-property for $c>0$ if, and only if, $\TQS\in\field^{M\times N}$ admits a TQS representation with a rank-profile $\lbrace \rho_{e} \rbrace_{e\in\EdgeSet}$ satisfying $\weight{e}\leq c$ for all $e\in \EdgeSet$.
\end{corollary}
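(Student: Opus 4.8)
\Cref{thm:universality} has two halves — the existence of a TSS representation with the stated rank-profile and its minimality — and for both it helps to first fix a root $r\in\NodeSet$, so that every edge of $\Graph$ is an up- or down-edge of $\Graph(r)$ and the generators can be organised through the spinner tables of \Cref{sec:TSSdefinition}. Minimality will be the easy half: given \emph{any} TSS representation of $\TSS$ with rank-profile $\{\rho'_e\}_{e\in\EdgeSet}$, specialise the factorisation $\TSS\nodeid{\bar{\NodeSubset A},\NodeSubset A}=\mat{\Gamma}\,\Phi\,\mat{\Psi}$ from the proof of \Cref{prop:TSSisGIRS} to a Hankel block whose only border edge is a single $e$; then $\Phi$ degenerates to an identity block and the product telescopes through that one edge-state, whose dimension is the rank-profile value paired with $e$ by the edge-to-unit-Hankel-block correspondence of \Cref{sec:universality}. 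Hence $\rho'_e\ge\rank\UnitHankelBlock{e}$ for every $e$, so any admissible rank-profile dominates $\{\rank\UnitHankelBlock{e}\}_e$, and it only remains to produce a representation attaining it.

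For existence the plan is to realise $\TSS$ as a tree-structured state-space model: attach to each directed edge $e$ a state of dimension exactly $\weight{e}=\rank\UnitHankelBlock{e}$, keep the diagonal blocks as $\Dmat^k=\TSS\nodeid{k,k}$, and choose the input-to-edge, edge-to-edge, and edge-to-output operators so that the telescoping products of \Cref{def:TSS} reproduce every off-diagonal block. The engine will be a full-rank factorisation $\UnitHankelBlock{e}=\mat{F}_e\mat{G}_e$, one per edge, with $\mat{F}_e$ of width $\weight{e}$ and $\mat{G}_e$ of height $\weight{e}$. What makes these compatible is an overlap of the unit Hankel blocks of neighbouring edges: for $e=(i,\Parent(i))$ and $e'=(i',i)$ with $i'\in\Children(i)$, the block $\TSS\nodeid{\Descendants(i'),\overline{\Descendants(i)}}$ is simultaneously a row-restriction of $\UnitHankelBlock{e}$ and a column-restriction of $\UnitHankelBlock{e'}$, so the relevant column and row spaces nest along the tree and a single coherent family of factorisations can be propagated — from the leaves of $\Graph(r)$ inward for the operators carrying flow towards $r$ and from $r$ outward for those carrying flow away from it. The change-of-basis matrices recording these nestings become the edge-to-edge operators $\Trans$ (appearing as a $\Umat$, $\Vmat$, or $\Wmat$ according to whether the flow turns child-to-parent, child-to-child, or parent-to-child), while the leftover ``local'' rows and columns at each node, together with $\Dmat^k$, furnish $\Cmat^k$, $\Qmat^k$, $\Bmat^k$, and $\Pmat^k$; one thereby fills in one spinner table per node. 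Specialised to a line graph this recovers the SSS construction of \Cref{sec:SSS} and to a binary tree with empty internal nodes the HSS construction of \Cref{sec:HSS}, so the algorithm is a genuine common generalisation.

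The hard part will be the compatibility bookkeeping at a node $k$ of arbitrary degree: a path may turn between \emph{any} two of the $|\Children(k)|+1$ edges incident to $k$, so one must choose the factorisations of all incident unit Hankel blocks so that \emph{every} pairwise change-of-basis identity holds simultaneously, and then verify that after subtracting all of the routed content what remains is precisely $\Dmat^k=\TSS\nodeid{k,k}$ together with the node's own rank-$\weight{e}$ input and output blocks. Checking that the sweep terminates with operators of the advertised dimensions and reconstructs $\TSS$ exactly is the work carried out in \Cref{sec:proofuniversality}. The Corollary then follows immediately: if $\TSS$ admits a TSS representation with $\weight{e}\le c$ for all $e$, then $(\TSS,\Graph)$ is GIRS with constant $c$ by \Cref{prop:TSSisGIRS}; conversely, if $(\TSS,\Graph)$ is GIRS with constant $c$ then each unit Hankel block, having edge count one, satisfies $\rank\UnitHankelBlock{e}\le c$, so by \Cref{thm:universality} there is a TSS representation with $\weight{e}=\rank\UnitHankelBlock{e}\le c$.
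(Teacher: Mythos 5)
Your closing paragraph is exactly the paper's intended derivation of this corollary: the forward direction from \Cref{prop:TSSisGIRS} and the converse from applying the GIRS bound to the unit Hankel blocks (edge count one) and invoking \Cref{thm:universality}. The lengthy preceding sketch of how \Cref{thm:universality} itself is proved is not needed here --- the corollary simply cites that theorem --- but it is broadly consistent with the paper's upsweep/downsweep construction via nested low-rank factorisations of the unit Hankel blocks, so the proposal is correct and takes essentially the same approach.
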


\subsection{Sums, products, and inverses of TQS matrices} \label{sec:TQSalgebra}
The algebraic properties of SSS and HSS matrices under addition, multiplication, and inversion generalize to TQS matrices. The following three propositions may be established from \Cref{thm:universality}.
\begin{proposition}[TQS addition]
     Let $\TQS_1, \TQS_2 \in \field^{M\times N}$, with $M=\sum_{i\in \NodeSet} m_i$ and $N=\sum_{i\in \NodeSet} n_i$, be TQS matrices of rank-profiles  $\lbrace \weight{1,e} \rbrace_{e\in \EdgeSet}$  and 
     $\lbrace \weight{2,e} \rbrace_{e\in \EdgeSet}$ associated with the acyclic connected graph $\Graph =(\NodeSet,\EdgeSet)$. Then, $\TQS_3 = \TQS_1 + \TQS_2$ is a TQS matrix of rank-profile $\lbrace \weight{1,e} + \weight{2,e} \rbrace_{e\in \EdgeSet}$.
\end{proposition}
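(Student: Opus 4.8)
The plan is to use the universality theorem (\Cref{thm:universality}) as a black box, together with the elementary observation that ranks of Hankel blocks are subadditive under matrix addition. First I would recall that both $\TSS_1$ and $\TSS_2$ are graph-partitioned matrices with respect to the \emph{same} acyclic connected graph $\Graph$ and the \emph{same} input/output dimensions $\lbrace n_i \rbrace$, $\lbrace m_i \rbrace$; hence $\TSS_3 := \TSS_1 + \TSS_2 \in \field^{M\times N}$ is also a graph-partitioned matrix associated with $\Graph$ (the block structure is inherited entry-wise, $\TSS_3\nodeid{i,j} = \TSS_1\nodeid{i,j} + \TSS_2\nodeid{i,j}$). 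By \Cref{thm:universality}, $\TSS_3$ therefore admits a TSS representation; what remains is to bound its minimal rank-profile. Since the minimal rank-profile of $\TSS_3$ is $\weight{3,e} = \rank \UnitHankelBlock[3]{e}$ where $\UnitHankelBlock[3]{e}$ is the unit Hankel block of $\TSS_3$ associated with edge $e$, it suffices to bound each $\rank \UnitHankelBlock[3]{e}$.

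The key step is the subadditivity of rank on Hankel blocks. Fix an edge $e\in\EdgeSet$ and a root node $r$ so that $e$ is paired with the node subset $\NodeSubset{H}_e$ as in the statement of \Cref{thm:universality}. Because the Hankel block $\TSS\nodeid{\NodeSubset{H}_e,\bar{\NodeSubset{H}}_e}$ is simply a (block-)submatrix extracted by the same index sets for every matrix partitioned over $\Graph$, we have
\begin{displaymath}
\UnitHankelBlock[3]{e} = \TSS_3\nodeid{\NodeSubset{H}_e,\bar{\NodeSubset{H}}_e} = \TSS_1\nodeid{\NodeSubset{H}_e,\bar{\NodeSubset{H}}_e} + \TSS_2\nodeid{\NodeSubset{H}_e,\bar{\NodeSubset{H}}_e} = \UnitHankelBlock[1]{e} + \UnitHankelBlock[2]{e}.
\end{displaymath}
Hence $\rank \UnitHankelBlock[3]{e} \le \rank \UnitHankelBlock[1]{e} + \rank \UnitHankelBlock[2]{e}$. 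Now, since $\TSS_1$ is \emph{given} as a TSS matrix of rank-profile $\lbrace \weight{1,e}\rbrace$, the minimality clause of \Cref{thm:universality} forces $\rank \UnitHankelBlock[1]{e} \le \weight{1,e}$ (the minimal rank-profile never exceeds the rank-profile of any valid representation), and likewise $\rank \UnitHankelBlock[2]{e} \le \weight{2,e}$. Combining, $\weight{3,e} = \rank \UnitHankelBlock[3]{e} \le \weight{1,e} + \weight{2,e}$ for every $e\in\EdgeSet$, and applying \Cref{thm:universality} once more yields a genuine TSS representation of $\TSS_3$ with rank-profile $\lbrace \weight{1,e} + \weight{2,e}\rbrace_{e\in\EdgeSet}$, as claimed.

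I do not expect a serious obstacle here; the proposition is essentially a corollary of \Cref{thm:universality}. The only point requiring a little care is the bookkeeping that the unit Hankel block index sets $\NodeSubset{H}_e$ depend only on $\Graph$ and the chosen root, not on the matrix entries, so that the three unit Hankel blocks $\UnitHankelBlock[1]{e}, \UnitHankelBlock[2]{e}, \UnitHankelBlock[3]{e}$ are literally the corresponding submatrices of $\TSS_1, \TSS_2, \TSS_3$ and thus related by addition; once this is noted, subadditivity of matrix rank ($\rank(X+Y)\le \rank X + \rank Y$) does all the work. One might alternatively give a constructive proof by directly concatenating generators (stacking the input-to-edge operators, block-diagonally combining edge-to-edge operators, etc.), which is what actually realizes the doubled bound in practice; but the rank-profile bound itself follows most cleanly from the universality theorem as above, so I would present that argument and relegate the explicit generator construction to a remark.
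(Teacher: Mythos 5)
Your argument is correct and is essentially the paper's proof: both rest on subadditivity of rank applied to the Hankel blocks of $\TSS_3 = \TSS_1 + \TSS_2$ followed by an appeal to \Cref{thm:universality}. The paper states this in one line; you merely fill in the (correct) bookkeeping that the relevant blocks are the unit Hankel blocks, that they add entrywise since the index sets depend only on $\Graph$, and that the given rank-profiles bound their ranks via the minimality clause.
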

\begin{proof}
    Since $\rank \TQS_3\nodeid{\bar{\NodeSubset{A}}, \NodeSubset{A}} \leq \rank \TQS_1\nodeid{\bar{\NodeSubset{A}}, \NodeSubset{A}} +  \rank \TQS_2\nodeid{\bar{\NodeSubset{A}}, \NodeSubset{A}}$  for any Hankel block induced by $\NodeSubset{A}$, the result directly follows from \Cref{thm:universality}. 
\end{proof}

\begin{proposition}[TQS product]
      Let $\TQS_1\in \field^{M\times N}$  and $\TQS_2 \in \field^{N\times P}$, with $M=\sum_{i\in \NodeSet} m_i$,  $N=\sum_{i\in \NodeSet} n_i$, and $P=\sum_{i\in \NodeSet} p_i$, be TQS matrices of rank-profiles  $\lbrace \weight{1,e} \rbrace_{e\in \EdgeSet}$  and 
     $\lbrace \weight{2,e} \rbrace_{e\in \EdgeSet}$ associated with the acyclic connected graph $\Graph =(\NodeSet,\EdgeSet)$. Then, $\TQS_3 = \TQS_1 \TQS_2$ is a TQS matrix of rank-profile $\lbrace \weight{1,e} + \weight{2,e} \rbrace_{e\in \EdgeSet}$.
\end{proposition}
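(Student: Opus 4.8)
The plan is to mimic the proof of TSS addition and reduce the statement to the universality theorem, \Cref{thm:universality}. The matrices $\TSS_1$, $\TSS_2$, and $\TSS_3:=\TSS_1\TSS_2$ are all graph-partitioned by the same acyclic connected graph $\Graph=(\NodeSet,\EdgeSet)$, with the block rows of $\TSS_3$ indexed by the output dimensions $\{m_i\}_{i\in\NodeSet}$ of $\TSS_1$ and its block columns by the input dimensions $\{p_i\}_{i\in\NodeSet}$ of $\TSS_2$. Hence, by \Cref{thm:universality}, it suffices to show that for every edge $e\in\EdgeSet$ the unit Hankel block of $\TSS_3$ associated with $e$ has rank at most $\weight{1,e}+\weight{2,e}$. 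Once this is established, \Cref{thm:universality} furnishes a TSS representation of $\TSS_3$ with rank-profile $\{\rank\UnitHankelBlock{e}\}_{e\in\EdgeSet}$, and enlarging each generator by appending zero blocks (which leaves $\TSS_3$ unchanged) upgrades this to a TSS representation with rank-profile exactly $\{\weight{1,e}+\weight{2,e}\}_{e\in\EdgeSet}$.

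Next I would fix a root $r\in\NodeSet$ and an edge $e\in\EdgeSet$, and set $\NodeSubset{H}:=\NodeSubset{H}_e$, $\bar{\NodeSubset{H}}:=\NodeSet\setminus\NodeSubset{H}$. The set $\NodeSubset{H}_e$ is purely graph-theoretic --- it equals $\Descendants(i)$ or $\NodeSet\setminus\Descendants(j)$ depending on the orientation of $e$ --- so it is the \emph{same} subset for $\TSS_1$, $\TSS_2$, and $\TSS_3$. Splitting the summation over the middle block index at the partition $\NodeSet=\NodeSubset{H}\cup\bar{\NodeSubset{H}}$ gives
\begin{displaymath}
  \TSS_3\nodeid{\NodeSubset{H},\bar{\NodeSubset{H}}} \;=\; \TSS_1\nodeid{\NodeSubset{H},\NodeSubset{H}}\,\TSS_2\nodeid{\NodeSubset{H},\bar{\NodeSubset{H}}}\;+\;\TSS_1\nodeid{\NodeSubset{H},\bar{\NodeSubset{H}}}\,\TSS_2\nodeid{\bar{\NodeSubset{H}},\bar{\NodeSubset{H}}}.
\end{displaymath}
The first summand factors through $\TSS_2\nodeid{\NodeSubset{H},\bar{\NodeSubset{H}}}$, which is precisely the unit Hankel block of $\TSS_2$ associated with $e$ and therefore has rank at most $\weight{2,e}$ --- here one invokes the minimality clause of \Cref{thm:universality}, namely that the rank-profile of any TSS representation of a matrix dominates, edgewise, the ranks of its unit Hankel blocks. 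Symmetrically, the second summand factors through $\TSS_1\nodeid{\NodeSubset{H},\bar{\NodeSubset{H}}}$, the unit Hankel block of $\TSS_1$ associated with $e$, of rank at most $\weight{1,e}$. Subadditivity of rank then gives $\rank\TSS_3\nodeid{\NodeSubset{H},\bar{\NodeSubset{H}}}\le\weight{1,e}+\weight{2,e}$, completing the reduction.

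I do not anticipate a genuine obstacle: this is the multiplicative analogue of TSS addition. The single point requiring care is \emph{where} to cut the middle index --- it must be placed at exactly the node set $\NodeSubset{H}_e$ defining the unit Hankel block, so that in each of the two resulting products one factor is itself a unit Hankel block (of $\TSS_1$ or of $\TSS_2$) and can be charged the appropriate weight. It is also worth recording explicitly that $\NodeSubset{H}_e$ is common to all three matrices (being determined by $\Graph$ and $r$ alone) and that it is the minimality half of \Cref{thm:universality}, not \Cref{prop:TSSisGIRS}, that yields the per-edge bound $\rank\UnitHankelBlock{e}\le\weight{s,e}$ for $\TSS_s$ ($s=1,2$); the GIRS bound of \Cref{prop:TSSisGIRS} would only give $\max_{e'}\weight{s,e'}$, which is too weak.
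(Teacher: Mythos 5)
Your proof is correct and follows essentially the same route as the paper's: split the Hankel block of $\TSS_3$ as $\TSS_1\nodeid{\NodeSubset{H},\NodeSubset{H}}\TSS_2\nodeid{\NodeSubset{H},\bar{\NodeSubset{H}}}+\TSS_1\nodeid{\NodeSubset{H},\bar{\NodeSubset{H}}}\TSS_2\nodeid{\bar{\NodeSubset{H}},\bar{\NodeSubset{H}}}$, bound the rank by subadditivity, and invoke \Cref{thm:universality}. You are merely more explicit than the paper about details it leaves implicit (that only unit Hankel blocks need checking, where the per-edge bound $\rank\UnitHankelBlock{e}\le\weight{s,e}$ comes from, and the zero-padding needed to realize the rank profile exactly), which is a refinement rather than a different argument.
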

\begin{proof}
     Since 
     \begin{eqnarray*}
         \rank \TQS_3\nodeid{\bar{\NodeSubset{A}}, \NodeSubset{A}} & = & \rank \left( \TQS_1\nodeid{\bar{\NodeSubset{A}}, \NodeSubset{A}} \TQS_2\nodeid{\NodeSubset{A}, \NodeSubset{A}}  + \TQS_1\nodeid{\bar{\NodeSubset{A}}, \bar{\NodeSubset{A}}} \TQS_2\nodeid{\bar{\NodeSubset{A}}, \NodeSubset{A}} \right) \\
         & \leq & \rank \TQS_1\nodeid{\bar{\NodeSubset{A}}, \NodeSubset{A}} +  \rank \TQS_2\nodeid{\bar{\NodeSubset{A}}, \NodeSubset{A}} 
     \end{eqnarray*}
      for any Hankel block induced by $\NodeSubset{A}$, the result directly follows from \Cref{thm:universality}. 
\end{proof}

\begin{proposition}[TQS inverse]\label{prop:TQSinverse}
       Let $\TQS \in \field^{N\times N}$, with $N=\sum_{i\in \NodeSet} n_i$, be a non-singular TQS matrix of rank-profile  $\lbrace \weight{e} \rbrace_{e\in \EdgeSet}$ associated with the acyclic connected graph $\Graph =(\NodeSet,\EdgeSet)$. Then, $\TQS^{-1}$ is also a TQS matrix of rank-profile $\lbrace \weight{e} \rbrace_{e\in \EdgeSet}$.
\end{proposition}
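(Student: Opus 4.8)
The plan is to imitate the proofs of the two preceding propositions and reduce the claim to \Cref{thm:universality} by controlling the ranks of the unit Hankel blocks of $\TSS^{-1}$. The crux is the rank-preservation lemma that, for a non-singular $\TSS\in\field^{N\times N}$, one has $\rank (\TSS^{-1})\nodeid{\bar{\NodeSubset{A}},\NodeSubset{A}} = \rank \TSS\nodeid{\bar{\NodeSubset{A}},\NodeSubset{A}}$ for every $\NodeSubset{A}\subset\NodeSet$; granting this, the proposition follows by applying the lemma to the subsets $\NodeSubset{H}_{e}$, $e\in\EdgeSet$, together with the universality theorem.

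To prove the lemma (a form of the ``nullity theorem'' for off-diagonal blocks of an inverse) I would argue via subspaces. Let $U\subseteq\field^N$ be the coordinate subspace spanned by the coordinates attached to the nodes of $\NodeSubset{A}$, and let $\pi\colon\field^N\to\field^N$ be the coordinate projection annihilating exactly those coordinates, so $\ker\pi = U$. The Hankel block $\TSS\nodeid{\bar{\NodeSubset{A}},\NodeSubset{A}}$ has the same rank as the composite $\pi\circ\TSS|_{U}$, whence, since $\TSS$ is invertible,
\[
\rank\TSS\nodeid{\bar{\NodeSubset{A}},\NodeSubset{A}} = \dim\TSS(U) - \dim(\TSS(U)\cap U) = \dim U - \dim(\TSS(U)\cap U).
\]
The identical computation with $\TSS^{-1}$ gives $\rank (\TSS^{-1})\nodeid{\bar{\NodeSubset{A}},\NodeSubset{A}} = \dim U - \dim(\TSS^{-1}(U)\cap U)$, and applying the invertible map $\TSS^{-1}$ to the subspace $\TSS(U)\cap U$ shows $\dim(\TSS(U)\cap U) = \dim(U\cap\TSS^{-1}(U))$. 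Combining the three facts proves the lemma.

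To finish, fix an edge $e\in\EdgeSet$ and take $\NodeSubset{A}=\NodeSubset{H}_{e}$: the lemma shows that the unit Hankel block of $\TSS^{-1}$ attached to $e$ has the same rank as $\UnitHankelBlock{e}$. Since $\TSS$ has rank-profile $\{\weight{e}\}$, specializing the factorization in the proof of \Cref{prop:TSSisGIRS} to a subset with a single border edge gives $\rank\UnitHankelBlock{e}\le\weight{e}$, so the corresponding unit Hankel block of $\TSS^{-1}$ also has rank at most $\weight{e}$. As $\TSS^{-1}$ is again a graph-partitioned matrix over the same acyclic connected $\Graph$, \Cref{thm:universality} yields a TSS representation of $\TSS^{-1}$ whose rank-profile is exactly the ranks of its own unit Hankel blocks, hence entrywise $\le\{\weight{e}\}$; padding generators with zeros where needed, $\TSS^{-1}$ is a TSS matrix of rank-profile $\{\weight{e}\}$.

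The only non-routine step is the rank-preservation lemma, and I expect the subspace-intersection argument above to be the cleanest way to obtain it: it avoids assuming that any diagonal sub-block $\TSS\nodeid{\NodeSubset{A},\NodeSubset{A}}$ is invertible, which a direct Schur-complement evaluation of $(\TSS^{-1})\nodeid{\bar{\NodeSubset{A}},\NodeSubset{A}}$ would require and which need not hold.
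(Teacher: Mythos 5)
Your proposal is correct and follows the same route as the paper: reduce to \Cref{thm:universality} via the identity $\rank \TSS^{-1}\nodeid{\bar{\NodeSubset{A}}, \NodeSubset{A}} = \rank \TSS\nodeid{\bar{\NodeSubset{A}}, \NodeSubset{A}}$. The only difference is that the paper asserts this rank-preservation identity (a form of the nullity theorem) without proof, whereas you supply a correct subspace-intersection argument for it, which is a worthwhile addition rather than a departure.
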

\begin{proof}
 Consider the Hankel block $\TQS \nodeid{\NodeSubset{A}, \NodeSubset{A}}$ and note that, under the hypothesis of \cref{prop:TQSinverse},  $\TQS \nodeid{\NodeSubset{A}, \NodeSubset{A}}$ is a square matrix.  Observe that under the hypothesis,  let $\TQS \nodeid{\NodeSubset{A}, \NodeSubset{A}} = \mat{U} \Sigma \mat{V}$ denote singular value decomposition and $\mat{B}(\epsilon) = \mat{U} (\Sigma + \epsilon \Id) \mat{V}$ for $\epsilon>0$. By construction the inverse of $\mat{B}(\epsilon)$ exists, and 
$$
\begin{bmatrix}
    \mat{B}(\epsilon) &  \TQS\nodeid{\NodeSubset{A}, \bar{\NodeSubset{A}}}   \\
     \TQS\nodeid{\bar{\NodeSubset{A}}, \NodeSubset{A}}  & \TQS\nodeid{\bar{\NodeSubset{A}}, \bar{\NodeSubset{A}}}
\end{bmatrix}^{-1} \!=\! \begin{bmatrix}
   * &  *  \\
     -\!\left( \TQS\nodeid{\bar{\NodeSubset{A}}, \bar{\NodeSubset{A}}}\!-\!    \TQS\nodeid{\bar{\NodeSubset{A}}, \NodeSubset{A}}  \mat{B}^{-1}\!(\epsilon)   \TQS\nodeid{\NodeSubset{A}, \bar{\NodeSubset{A}}}
 \right)^{-1}\!\TQS\nodeid{\bar{\NodeSubset{A}}, \NodeSubset{A}}  \mat{B}^{-1}\!(\epsilon)  & * 
\end{bmatrix}^{-1}\!.
$$
By taking limits for $\epsilon\rightarrow 0$, it becomes straightforward to show that $\rank \TQS^{-1}\nodeid{\bar{\NodeSubset{A}}, \NodeSubset{A}} = \rank \TQS \nodeid{\bar{\NodeSubset{A}}, \NodeSubset{A}}$ for any Hankel block induced by $\NodeSubset{A}$. The proposition then follows from \Cref{thm:universality} and a limiting argument on the rank of determinants and minors. 
\end{proof}
\section{TQS matrix construction} \label{sec:construction}
This section discusses the construction (or realization in the language of systems theory) of a TQS representation from a dense matrix provided a tree $\Graph(r)$ and an accompanying partitioning of the matrix. In \Cref{sec:constructionexample} we describe the construction on an illustrative example. The general algorithm is described in \Cref{sec:constructionalg}. The construction or realization algorithm is naturally a generalization of the SSS and HSS realization algorithms. The presented algorithm will allow us to prove \cref{thm:universality} in \cref{sec:universality}. This is done in \Cref{sec:proofuniversality}.

\subsection{An illustrative example} \label{sec:constructionexample}
Before describing the general algorithm, we first illustrate the construction of a TQS representation on an illustrative example. Consider the tree
 \begin{equation*}
\begin{tikzpicture}
     \node at (-1.2,0) {$\Graph_d(7):$};
     \node[circle, draw, fill=black, inner sep=1.5pt] (myNodeA) at (0,1.0) {};
  \node[left] at (myNodeA.center) {$1$};
  \node[circle, draw, fill=black, inner sep=1.5pt] (myNodeB) at (1,1) {};
  \node[above] at (myNodeB.center) {$5$};
   \node[circle, draw, fill=black, inner sep=1.5pt] (myNodeC) at (2,1) {};
  \node[right] at (myNodeC.north) {$\pmb{\bm{7}}$}; 
\node[circle, draw, fill=black, inner sep=1.5pt] (myNodeE) at (1,0) {};
  \node[left] at (myNodeE.center) {$2$}; 
    \node[circle, draw, fill=black, inner sep=1.5pt] (myNodeD) at (2,0) {};
  \node[left] at (myNodeD.center) {$6$}; 
   \node[circle, draw, fill=black, inner sep=1.5pt] (myNodeF) at (3,0) {};
  \node[right] at (myNodeF.center) {$4$}; 
     \node[circle, draw, fill=black, inner sep=1.5pt] (myNodeG) at (2,-1) {};
  \node[right] at (myNodeG.center) {$3$}; 
    \draw (myNodeA) -- (myNodeB);
    \draw (myNodeB) -- (myNodeC);
    \draw (myNodeC) -- (myNodeD);
     \draw (myNodeB) -- (myNodeE);
     \draw (myNodeD) -- (myNodeG);
     \draw (myNodeD) -- (myNodeF);
\end{tikzpicture}  
\end{equation*}
with node $7$ picked as the root node. $\mathbb{G}_b(7)$ is a tree of depth 2 and comprises of the levels: $\NodeSet_0 = \lbrace 7 \rbrace$,  $\NodeSet_1 = \lbrace 5,6 \rbrace$, and $\NodeSet_2 = \lbrace 1,2,3,4 \rbrace$.   The corresponding spinner matrices and TQS form are shown in \Cref{fig:TQSexample}. It turns out that the generating matrices of the TQS representation can be retrieved from computing low-rank factorizations of the unit Hankel blocks $\UnitHankelBlock{(i,j)}$ in a particular sequence. 

\begin{sidewaysfigure}
    \centering
\begin{equation*}
\Spinner^{1} = \begin{blockarray}{ccc}
& \mathbf{5} & i  \\
\begin{block}{c[cc]} 
 \mathbf{5} & 0 &  \Bmat^{1}_5 \\
 o & \Qmat^{1}_5  & \Dmat^1   \\
\end{block}
\end{blockarray}\quad \Spinner^{2} = \begin{blockarray}{ccc}
& \mathbf{5} & i  \\
\begin{block}{c[cc]} 
\mathbf{5} & 0 &  \Bmat^{2}_5 \\
o & \Qmat^{2}_5  & \Dmat^2   \\
\end{block}
\end{blockarray}\quad \Spinner^{3} = \begin{blockarray}{ccc}
& \mathbf{6} & i  \\
\begin{block}{c[cc]} 
0 &  \Bmat^{3}_6 & \mathbf{6}\\
o & \Qmat^{3}_6  & \Dmat^3   \\
\end{block}
\end{blockarray}, \quad \Spinner^{4} = \begin{blockarray}{ccc}
& \mathbf{6} & i  \\
\begin{block}{c[cc]} 
\mathbf{6} & 0 &  \Bmat^{4}_6 \\
 o & \Qmat^{4}_6  & \Dmat^4  \\
\end{block}
\end{blockarray}
\end{equation*}
\begin{equation*}
    \Spinner^{5} = \begin{blockarray}{ccccc}
& 1& 2& \mathbf{7} & i  \\
\begin{block}{c[cccc]} 
1 & 0 &    \Vmat^{5}_{1,2}   &   \Wmat^{5}_{1,7}  &  \Pmat^{5}_1   \\
2 & \Vmat^{5}_{2,1}    &   0  &   \Wmat^{5}_{2,7}    &   \Pmat^{5}_2   \\
 \mathbf{7} & \Umat^{5}_{7,1}    &   \Umat^{5}_{7,2}    &    0 &   \Bmat^5_7    \\
 o & \Cmat^{5}_1  &  \Cmat^{5}_2 &   \Qmat^{5}_7  & \Dmat^5  \\
\end{block}
\end{blockarray}, \quad \Spinner^{6} = \begin{blockarray}{ccccc}
& 3& 4& \mathbf{7} & i  \\
\begin{block}{c[cccc]} 
3 & 0 &  \Vmat^{6}_{3,4}   &    \Wmat^{6}_{3,7}  &   \Pmat^{6}_3   \\
4 & \Vmat^{6}_{4,3}    &    0&   \Wmat^{6}_{4,7}    &     \Pmat^{6}_{4}  \\
\mathbf{7} & \Umat^{6}_{7,3}    & \Umat^{6}_{7,4}    & 0 &  \Bmat^6_7   \\
o & \Cmat^{6}_3 &  \Cmat^{6}_4 &   \Qmat^{6}_7  & \Dmat^6  \\
\end{block}
\end{blockarray}
\end{equation*}
\begin{equation*}
    \Spinner^{7} = \begin{blockarray}{cccc}
& 5& 6&  i \\
\begin{block}{c[ccc]} 
5 & 0 &  \Wmat^{7}_{5,6}    &  \Pmat^{7}_5  \\
6 & \Wmat^{7}_{6,5}    &    0    &   \Pmat^{7}_6  \\
 o & \Cmat^{7}_5 &  \Cmat^{7}_6 & \Dmat^7 \\
\end{block}
\end{blockarray}
\end{equation*}
 \resizebox{\textwidth}{!}{
$\TQS = \begin{blockarray}{cccccccc}
1 & 2 & 3 & 4 & 5 & 6 & 7 \\
\begin{block}{[ccccccc]c} 
\Dmat^1 &   \Qmat^1_5 \Vmat^5_{1,2} \Bmat^2_5 & \Qmat^1_5 \Wmat^5_{1,7} \Vmat^7_{5,6} \Umat^6_{7,3} \Bmat^3_6 & \Qmat^1_5 \Wmat^5_{1,7} \Vmat^7_{5,6} \Umat^6_{7,4} \Bmat^4_6 &  \Qmat^1_5 P^5_1  & \Qmat^1_{5} \Umat^5_{1,7} \Vmat^7_{5,6} \Bmat^6_7  & \Qmat^1_5 \Wmat^5_{1,7} \Pmat^7_5 & 1 \\
\Qmat^2_5 \Vmat^5_{2,1} \Bmat^1_5 & \Dmat^2 &  \Cmat^2_5 \Wmat^5_{2,7} \Vmat^7_{6,5} \Umat^6_{7,3} \Bmat^3_6 &  \Qmat^2_5 \Wmat^5_{1,7} \Vmat^7_{5,6} \Umat^6_{7,4} \Bmat^4_6 &  \Qmat^2_5 \Pmat^5_{2} & \Qmat^2_5 \Wmat^5_{2,7} \Vmat^7_{5,6} \Bmat^6_7 & \Qmat^2_5 \Wmat^5_{2,7} \Pmat^7_5 & 2  \\
\Qmat^3_6 \Wmat^6_{3,7} \Vmat^7_{6,5} \Umat^5_{7,1} \Bmat^1_5 &  \Qmat^3_6 \Wmat^6_{3,7} \Vmat^7_{6,5} \Umat^5_{7,2} \Bmat^2_5 & \Dmat^3 & \Qmat^3_6 \Vmat^6_{3,4} \Bmat^4_6 & \Qmat^3_6 \Wmat^6_{3,7} \Vmat^7_{6,5} \Bmat^5_7 & \Qmat^3_6 \Pmat^6_3 & \Qmat^3_6 \Wmat^6_{3,7} \Pmat^7_{6} & 3 \\
\Qmat^4_6 \Wmat^6_{4,7} \Vmat^7_{6,5} \Umat^5_{7,1} \Bmat^1_5 &    \Qmat^4_6 \Wmat^6_{4,7} \Vmat^7_{6,5} \Umat^5_{7,2} \Bmat^2_5 & \Qmat^4_6 \Vmat^6_{4,3} \Bmat^3_6 & \Dmat^4 & \Qmat^4_6 \Wmat^6_{4,7} \Vmat^7_{6,5} \Bmat^5_{7} &  \Qmat^4_6 \Pmat^6_4 & \Qmat^4_6 \Wmat^6_{1,7} \Pmat^7_6 & 4 \\
\Cmat^5_1 \Bmat^1_5 &  \Cmat^5_2 \Bmat^2_5 &   \Qmat^5_{7} \Vmat^7_{5,6} \Umat^6_{7,3} \Bmat^3_6 &  \Qmat^5_7 \Vmat^7_{5,6} \Umat^6_{7,4} \Bmat^4_6 & \Dmat^5 & \Qmat^5_7 \Vmat^7_{5,6} \Bmat^6_7 &  \Qmat^5_7 \Pmat^7_5 & 5 \\
\Qmat^6_7 \Vmat^7_{6,5} \Umat^5_{7,1} \Bmat^1_5 &    \Qmat^6_7 \Umat^7_{6,5} \Vmat^5_{7,2} \Bmat^2_5 &  \Cmat^6_{3} \Bmat^3_6 & \Cmat^6_4 \Bmat^4_{6} &  \Qmat^6_7 \Vmat^7_{7,5} \Bmat^5_7 & \Dmat^6 & \Qmat^6_7 \Pmat^7_6 & 6 \\
\Cmat^7_5 \Umat^5_{7,1} \Bmat^1_5 & \Cmat^7_5 \Umat^5_{7,2} \Bmat^2_5 & \Cmat^7_{6} \Umat^6_{7,3} \Bmat^3_6 & \Cmat^7_6 \Umat^6_{7,4} \Bmat^4_6 &  \Cmat^7_5 \Bmat^5_7 &  \Cmat^7_6 \Bmat^6_{7} & \Dmat^7 & 7 \\
\end{block}
\end{blockarray}
$
}
    \caption{The spinner matrices and TQS form associated with the tree $\Graph_b(7)$}
    \label{fig:TQSexample}
\end{sidewaysfigure}

To start, we begin with the unit Hankel blocks associated with the edges at the deepest level of the tree. Specifically, from the low-rank factorizations of the unit Hankel blocks $\UnitHankelBlock{(i,j)}=\Xmat{(i,j)} \Ymat{(i,j)}$ with $i\in \NodeSet_2$ and $j=\Parent(i)\in \NodeSet_1$, we shall be able to obtain the $\Bmat$'s of the spinner matrices corresponding to the nodes in $\NodeSet_2$ and the $\Cmat$'s of the spinner matrices corresponding to the nodes in $\NodeSet_1$. For example, for $i=1 \in \NodeSet_2$ and $j= \Parent(1)=5 \in \NodeSet_1$, we may write
\begin{displaymath}
 \UnitHankelBlock{(1,5)} = \begin{blockarray}{cc}\begin{block}{c[c]}
    2 &  \Xmat{(1,5)}\nodeid{2} \\
    3 & \Xmat{(1,5)}\nodeid{3}  \\
    4 & \Xmat{(1,5)} \nodeid{4} \\
    5 & \Xmat{(1,5)} \nodeid{5}  \\
    6 & \Xmat{(1,5)} \nodeid{6}  \\
    7 & \Xmat{(1,5)} \nodeid{7}  \\
 \end{block} 
 \end{blockarray} \begin{blockarray}{c}
 1 \\
     \begin{block}{[c]}
         \Ymat{(1,5)}\nodeid{1} \\
     \end{block}
 \end{blockarray}
\end{displaymath}
since we know that $\UnitHankelBlock{(1,5)}$ should factor into  
\begin{displaymath}
\begin{blockarray}{cc}
\begin{block}{c[c]} 
2 & \Qmat^2_5 \Vmat^5_{2,1}  \\ 
3 & \Qmat^3_6 \Wmat^6_{3,7} \Vmat^7_{6,5} \Umat^5_{7,1}   \\
4 & \Qmat^4_6 \Wmat^6_{4,7} \Vmat^7_{6,5} \Umat^5_{7,1}  \\
5 & \Cmat^5_1  \\
6 & \Qmat^6_7 \Vmat^7_{6,5} \Umat^5_{7,1}  \\
7 & \Cmat^7_5 \Umat^5_{7,1} \\
\end{block}
\end{blockarray}
\begin{blockarray}{c}
1 \\
\begin{block}{[c]} 
B^1_5 \\
\end{block}
\end{blockarray},
\end{displaymath}
we may set $\Cmat^5_1 =  \Xmat{(1,5)}\nodeid{5}\in\field^{m_5 
\times \weight{(1,5)}}$ and $B^5_1 = \Ymat{(1,5)}\nodeid{1}\in\field^{\weight{(1,5)} \times n_1}$ with $\weight{(1,5)} = \rank\UnitHankelBlock{(1,5)}$. With similar reasoning, we may set $\Cmat^5_2 = \Xmat{(2,5)} \nodeid{5} \in \field^{m_5 \times \weight{(2,5)}}$, $\Bmat^2_5 = \Ymat{(2,5)}\nodeid{2} \in \field^{\weight{(2,5)} \times n_2}$, $\Cmat^6_{3}=\Xmat{(3,6)}\nodeid{6} \in \field^{m_6 \times \weight{(3,6)}}$, $\Bmat^3_6 = \Ymat{(3,6)}\nodeid{3} \in  \field^{\weight{(3,6)} \times n_3}$, $\Cmat^6_4 = \Xmat{(4,6)} \nodeid{6} \in \field^{ m_6 \times \weight{(4,6)} }$, $\Bmat^4_6=\Ymat{(4,6)}\nodeid{4} \in \field^{ \weight{(4,6)} \times n_4}$ with $\rho_{(2,5)} = \rank \UnitHankelBlock{(2,5)}$, $\rho_{(3,6)} = \rank \UnitHankelBlock{(3,6)}$, and $\rho_{(4,6)} = \rank  \UnitHankelBlock{(4,6)}$.

Next, moving one level up the tree by computing low-rank factorizations $\UnitHankelBlock{(i,j)}=\Xmat{(i,j)} \Ymat{(i,j)}$ with $i\in \NodeSet_1$ and $j=\Parent(i)\in \NodeSet_0$, we can obtain the $\Umat$'s and $\Bmat$'s of the spinner matrices corresponding to the nodes in $\NodeSet_1$ and the $\Cmat$'s of the spinner matrices corresponding to the nodes in $\NodeSet_0$. For example, for $i=5 \in \NodeSet_1$ and $j= \Parent(5)=7 \in \NodeSet_0$, we may write 
\begin{displaymath}
       \UnitHankelBlock{(5,7)}  =  \begin{blockarray}{cccc}
       \begin{block}{c[ccc]} 
      3 &     \Xmat{(1,5)} \nodeid{3}  & \Xmat{(2,5)} \nodeid{3} & \TQS \nodeid{3,5} \\
4 & \Xmat{(1,5)}\nodeid{4}  & \Xmat{(2,5)}  \nodeid{4} & \TQS\nodeid{4,5}   \\
6 & \Xmat{(1,5)} \nodeid{6} &  \Xmat{(2,5)}  \nodeid{6} & \TQS\nodeid{6,5}  \\
7 & \Xmat{(1,5)} \nodeid{7} & \Xmat{(2,5)}  \nodeid{7} & \TQS\nodeid{7,5} \\
       \end{block}
       \end{blockarray} 
       \begin{blockarray}{ccc}
         \Descendants(1)   &  \Descendants(2)  & 5  \\
       \begin{block}{[ccc]}
       \Ymat{(1,5)} &  &  \\
                                & \Ymat{(2,5)} &  \\
                             &   & \Id \\
       \end{block}
       \end{blockarray}
\end{displaymath}
with the help of previously computed factorizations. The low-rank factorization $\UnitHankelBlock{(5,7)} = \Xmat{(5,7)} \Ymat{(5,7)}$ can be obtained by compressing the matrix 
\begin{multline*}
    \begin{blockarray}{cccc}
       \begin{block}{c[ccc]} 
      3 &     \Xmat{(1,5)} \nodeid{3}  & \Xmat{(2,5)} \nodeid{3} & \TQS \nodeid{3,5} \\
4 & \Xmat{(1,5)}\nodeid{4}  & \Xmat{(2,5)}  \nodeid{4} & \TQS\nodeid{4,5}   \\
6 & \Xmat{(1,5)} \nodeid{6} &  \Xmat{(2,5)}  \nodeid{6} & \TQS\nodeid{6,5}  \\
7 & \Xmat{(1,5)} \nodeid{7} & \Xmat{(2,5)}  \nodeid{7} & \TQS\nodeid{7,5} \\
       \end{block}
       \end{blockarray} = 
       \begin{blockarray}{cc} \begin{block}{c[c]} 
     3 &   \Xmat{(5,7)} \nodeid{3}   \\
4 & \Xmat{(5,7)} \nodeid{4}   \\
6 & \Xmat{(5,7)} \nodeid{6}   \\
7 & \Xmat{(5,7)}\nodeid{7} \\
       \end{block}
       \end{blockarray} \begin{bmatrix}
           \Zmat{(5,7)}^{\Descendants(1)}   & \Zmat{(5,7)}^{\Descendants(2)} & \Zmat{(5,7)}^5
       \end{bmatrix},
\end{multline*}
which sets
\begin{multline*}
\begin{blockarray}{ccc} 
       \Descendants(1) & \Descendants(2) & 5 \\
       \begin{block}{[ccc]}
\Ymat{(5,7)} \nodeid{\Descendants(1)}  & \Ymat{(5,7)} \nodeid{\Descendants(2)} & \Ymat{(5,7)} \nodeid{5}   \\
       \end{block}
       \end{blockarray}
   \! = \!  \begin{blockarray}{ccc} 
       \Descendants(1) & \Descendants(2) & 5 \\
       \begin{block}{[ccc]}
\Zmat{(5,7)}^{\Descendants(1)} \Ymat{(1,5)}   & \Zmat{(5,7)}^{\Descendants(2)} \Ymat{2,5)} & \Zmat{(5,7)}^5  \\
       \end{block}
       \end{blockarray}.
\end{multline*}
Since $\Ymat{(1,5)} = \Bmat^1_5$, $\Ymat{(2,5)} = \Bmat^2_5$ and $\UnitHankelBlock{(5,7)}$ should factor into
\begin{eqnarray*}
\begin{blockarray}{cc}
\begin{block}{c[c]}
3 & \Qmat^3_6 \Wmat^6_{3,7} \Vmat^7_{6,5}  \\
4 & \Qmat^4_6 \Wmat^6_{4,7} \Vmat^7_{6,5}  \\
6 & \Qmat^6_7 \Vmat^7_{6,5} \\ 
7 & \Cmat^7_5 \\
\end{block}
\end{blockarray}   \begin{blockarray}{ccc}
 \Descendants(1) & \Descendants(2) & 5 \\
\begin{block}{[ccc]}
    \Umat^5_{7,1} \Bmat^1_5 & \Umat^5_{7,2} \Bmat^2_5 & \Bmat^5_7 \\
\end{block}
\end{blockarray},
\end{eqnarray*}
we see that $\Cmat^7_5 = \Xmat{(5,7)} \nodeid{7} \in \field^{m_7 \times \weight{(5,7)}}$, $\Umat^5_{7,1} = \Zmat{(5,7)}^{\Descendants(1)} \in \field^{\weight{(5,7)} \times \weight{(1,5)}}$, $\Umat^5_{7,2} = \Zmat{(5,7)}^{\Descendants(2)} \in \field^{\weight{(5,7)} \times \weight{(2,5)}}$, $\Bmat^5_7 = \Zmat{(5,7)}^5 \in \field^{ \weight{(5,7)} \times n_5}$ with $\weight{(5,7)} = \rank \UnitHankelBlock{(5,7)}$. With similar reasoning, we may set $\Cmat^7_6 = \Xmat{(6,7)}\nodeid{7} \in \field^{  m_7 \times \weight{(6,7)} }$, $\Umat^6_{7,3} = \Zmat{(6,7)}^{\Descendants(3)} \in \field^{ \weight{(6,7)} \times \weight{(3,6)} }$, $\Umat^6_{7,4} = \Zmat{(6,7)}^{\Descendants(4)} \in \field^{ \weight{(6,7)} \times \weight{(4,6)} }$, $\Bmat^6_7 = \Zmat{(6,7)}^6 \in \field^{ \weight{(6,7)} \times n_6}$ with $\weight{(6,7)} = \rank \UnitHankelBlock{(6,7)}$.

By now, we have fully climbed up the tree and arrived at the root node. In this process, we have computed all the $\Bmat$'s, $\Umat$'s, and $\Cmat$'s of the TQS representation. This was done by peeling off the terms from the low-rank factorizations of the unit Hankel blocks. To compute the remaining  $\Pmat$'s, $\Vmat$'s, $\Wmat$'s, and $\Qmat$'s, we proceed in the same way. However, the main difference is that we start at the root and work ourselves down the tree towards the leaves.  To start, through computing the low-rank factorizations $\UnitHankelBlock{(j,i)}=\Xmat{(j,i)} \Ymat{(j,i)}$ with $i\in \NodeSet_1$ and $j=\Parent(i)\in \NodeSet_0$, we will able to retrieve the $\Pmat$'s and $\Vmat$'s of the spinner matrices corresponding to the nodes in $\NodeSet_0$ and the $\Qmat$'s of the spinner matrices corresponding to the nodes in $\NodeSet_1$. For example, for $i=5 \in \NodeSet_1$ and $j= \Parent(5)=7 \in \NodeSet_0$, we may write 
\begin{displaymath}
          \UnitHankelBlock{(7,5)}  =  \begin{blockarray}{ccc}
       \begin{block}{c[cc]} 
      1 &     \Xmat{(6,7)} \nodeid{1}  & \TQS \nodeid{1,7} \\
2 & \Xmat{(6,7)}\nodeid{2}   & \TQS\nodeid{2,7}   \\
5 & \Xmat{(6,7)} \nodeid{5}  & \TQS\nodeid{5,7} \\
       \end{block}
       \end{blockarray} 
       \begin{blockarray}{cc}
         \Descendants(6)    & 7  \\
       \begin{block}{[cc]}
        \Ymat{(6,7)} &  \\
              & \Id \\
       \end{block}
       \end{blockarray}.
\end{displaymath}
Compressing the matrix 
\begin{displaymath}
         \begin{blockarray}{ccc}
       \begin{block}{c[cc]} 
      1 &     \Xmat{(6,7)} \nodeid{1}  & \TQS \nodeid{1,7} \\
2 & \Xmat{(6,7)}\nodeid{2}   & \TQS\nodeid{2,7}   \\
5 & \Xmat{(6,7)} \nodeid{5}  & \TQS\nodeid{5,7} \\
       \end{block}
       \end{blockarray}  = \begin{blockarray}{cc} \begin{block}{c[c]} 
     1 &   \Xmat{(7,5)} \nodeid{1}   \\
2 & \Xmat{(7,5)} \nodeid{2}   \\
5 & \Xmat{(7,5)} \nodeid{5}   \\
       \end{block}
       \end{blockarray} \begin{bmatrix}
           \Zmat{(7,5)}^{\Descendants(6)}  & \Zmat{(7,5)}^7
       \end{bmatrix}
\end{displaymath}
allows us to produce the  low-rank factorization $\UnitHankelBlock{(7,5)} = \Xmat{(7,5)} \Ymat{(7,5)}$ with
\begin{displaymath}
     \begin{blockarray}{cc}
         \Descendants(6)    & 7  \\
       \begin{block}{[cc]}
        \Ymat{(7,5)}\nodeid{\Descendants(6)} & \Ymat{(7,5)}\nodeid{7} \\
       \end{block}
       \end{blockarray}  =   \begin{blockarray}{cc}
         \Descendants(6)    & 7  \\
       \begin{block}{[cc]}
        \Zmat{(7,5)}^{\Descendants(6)} \Ymat{(6,7)} & \Zmat{(7,5)}^7 \\
       \end{block}
       \end{blockarray}
\end{displaymath}
Since $\Ymat{(6,7)} = \begin{blockarray}{ccc}
3 & 4 & 6 \\
\begin{block}{[ccc]}
      \Umat^6_{7,3} \Bmat^3_6 & \Umat^6_{7,4} \Bmat^4_6 & \Bmat^6_7 \\
\end{block}
\end{blockarray}$ and $\UnitHankelBlock{(7,5)}$ should factor into 
\begin{displaymath}
        \begin{blockarray}{cc}
        \begin{block}{c[c]}
1 & \Qmat^1_5 \Wmat^5_{1,7}  \\
2 & \Qmat^2_5 \Wmat^5_{2,7}  \\
5 &  \Qmat^5_{7} \\
   \end{block}
\end{blockarray} \begin{blockarray}{cc} \Descendants(6) & 7 \\ \begin{block}{[cc]}  \Vmat^7_{5,6} \begin{bmatrix}
    \Umat^6_{7,3} \Bmat^3_6 & \Umat^6_{7,4} \Bmat^4_6 & \Bmat^6_7 
\end{bmatrix}  &  \Pmat^7_5   \\ \end{block} \end{blockarray},
\end{displaymath}
we may set $\Qmat^5_{7} = \Xmat{(7,5)} \nodeid{5} \in \field^{m_5 \times \weight{(7,5)}}$, $\Vmat^7_{5,6}= \Zmat{(7,5)}^{\Descendants(6)}  \in\field^{\weight{(7,5)} \times \weight{(6,7)}}$, $\Pmat^7_5  = \Zmat{(7,5)}^7 \in\field^{\weight{(7,5)} \times n_7}$ with $\weight{(7,5)} = \rank \UnitHankelBlock{(7,5)}$. With similar reasoning, we may set $\Qmat^6_{7} = \Xmat{(7,6)} \nodeid{6} \in \field^{m_6 \times \weight{(7,5)}}$, $\Vmat^7_{5,6}= \Zmat{(7,6)}^{\Descendants(5)} \in\field^{\weight{(7,6)} \times \weight{(5,7)}}$, $\Pmat^7_6  = \Zmat{(7,6)}^7 \in\field^{\weight{(7,6)} \times n_7}$ with $\weight{(7,6)} = \rank \UnitHankelBlock{(7,6)}$.

At last, moving one level down, we reach the bottom of the tree. By computing the low-rank factorizations $\UnitHankelBlock{(j,i)}=\Xmat{(j,i)} \Ymat{(j,i)}$ with $i\in \NodeSet_2$ and $j=\Parent(i)\in \NodeSet_1$, we will be able to compute all the remaining terms of TQS representation. Specifically,  will be able to retrieve all the $\Pmat$'s, $\Wmat$'s, and $\Vmat$'s of the spinner matrices corresponding to the nodes in $\NodeSet_1$ and the $\Qmat$'s of the spinner matrices corresponding to the nodes in $\NodeSet_2$. For example, for $i=1 \in \NodeSet_2$ and $j= \Parent(1)=5 \in \NodeSet_1$, we may write
\begin{displaymath}
    \UnitHankelBlock{(5,1)} = \begin{blockarray}{cccc}
        \begin{block}{c[ccc]}
        1 & \Xmat{(7,5)}\nodeid{1} &  \Xmat{(2,5)}\nodeid{1} & \TQS\nodeid{1,5} \\
    \end{block}
    \end{blockarray} \begin{blockarray}{ccc}
        \Descendantscompl(5) &  \Descendants(2) & 5 \\
        \begin{block}{[ccc]}
            \Ymat{(7,5)} &   &  \\
               & \Ymat{(2,5)} &  \\
               &         & \Id \\
        \end{block}
    \end{blockarray}
\end{displaymath}
Compressing the matrix 
\begin{multline*}
            \begin{blockarray}{cccc}
        \begin{block}{c[ccc]}
        1 & \Xmat{(7,5)}\nodeid{1} &  \Xmat{(2,5)}\nodeid{1} & \TQS\nodeid{1,5} \\
    \end{block}
    \end{blockarray}  =  \begin{blockarray}{cc} \begin{block}{c[c]} 
     1 &   \Xmat{(5,1)} \nodeid{1}   \\
       \end{block}
       \end{blockarray} \begin{bmatrix}
          \Zmat{(5,1)}^{\Descendantscompl(5)}  &  \Zmat{(5,1)}^{\Descendants(2)} & \Zmat{(5,1)}^5
       \end{bmatrix}
\end{multline*}
allows us to produce the  low-rank factorization $\UnitHankelBlock{(7,5)} = \Xmat{(7,5)} \Ymat{(7,5)}$ with
\begin{multline*}
\begin{blockarray}{ccc} 
       \Descendantscompl(5) & \Descendants(2) & 5 \\
       \begin{block}{[ccc]}
\Ymat{(5,2)} \! \nodeid{\Descendantscompl(5)}  & \Ymat{(5,2)} \!\nodeid{\Descendants(2)} & \Ymat{(5,2)}\! \nodeid{5}   \\
       \end{block}
       \end{blockarray}
    \! = \!
    \begin{blockarray}{ccc} 
       \Descendantscompl(5) & \Descendants(2) & 5 \\
       \begin{block}{[ccc]}
\Zmat{(5,1)}^{\Descendantscompl(5)}  \Ymat{(7,5)}   & \Zmat{(5,1)}^{\Descendants(2)}  \Ymat{(2,5)} & \Zmat{(7,5)}^5 \\
       \end{block}
       \end{blockarray}.
\end{multline*}
Since $\Ymat{(7,5)} = \begin{bmatrix}
                  \Vmat^7_{5,6} \Umat^6_{7,3} \Bmat^3_6 &   \Vmat^7_{6,5} \Umat^6_{7,4} \Bmat^4_6 &   \Vmat^7_{5,6} \Bmat^6_7  &   \Pmat^7_5  
            \end{bmatrix}$, $\Ymat{(2,5)} =\Bmat^2_5$ and   $\UnitHankelBlock{(5,1)}$ should factor into
\begin{displaymath}
    \begin{blockarray}{cc}  \begin{block}{c[c]} 1 & \Qmat^1_5  \\ \end{block} \end{blockarray}  
    \begin{blockarray}{ccc}
    \Descendantscompl(5) &  \Descendants(2) & 5 \\
        \begin{block}{[ccc]}
            \Wmat^5_{1,7} \begin{bmatrix}
                  \Vmat^7_{5,6} \Umat^6_{7,3} \Bmat^3_6 &   \Vmat^7_{6,5} \Umat^6_{7,4} \Bmat^4_6 &   \Vmat^7_{5,6} \Bmat^6_7  &   \Pmat^7_5  
            \end{bmatrix} &  \Vmat^5_{1,2} \Bmat^2_5  & \Pmat^5_1 \\
        \end{block}
    \end{blockarray},
\end{displaymath}
  we see that $\Qmat^1_5  = \Xmat{(5,1)} \nodeid{1} \in \field^{m_1 \times \weight{(5,1)}}$, $\Wmat^5_{1,7} = \Zmat{(5,1)}^{\Descendantscompl(5)} \in \field^{\weight{(5,1)} \times \weight{(7,5)}}$,   $\Vmat^5_{1,2} = \Zmat{(5,1)}^{\Descendants(2)} \in \field^{\weight{(5,1)\times \weight{(2,5)}}}$,  $\Pmat^5_1 = \Zmat{(7,5)}^{5}\in\field^{\weight{(5,1)}\times n_5}$ with $\weight{(5,1)}=\rank \UnitHankelBlock{(5,1)}$. The remaining terms of the TQS representation are obtained in the same way.

\subsection{The general construction algorithm} \label{sec:constructionalg}

The approach taken for the example of the previous section generalizes for a generic TQS representation.  This process is described in \cref{alg:construction}. The process of converting a dense matrix into TQS form consists of two phases: an upsweep and a downsweep phase. In the upsweep phase, one starts at the leaves of the tree and works up towards the root. In this process, all the $\Bmat$'s, $\Cmat$'s, and $\Umat$'s are computed. In the downsweep phase that follows, one starts at the root of the tree and then works down towards the leaves. In this second leg,  the $\Pmat$'s, $\Qmat$'s, $\Wmat$'s, and $\Vmat$'s are computed. The additional `nomenclature' introduced in \Cref{sec:TQSdefinition} reveals that all of the generators are computed exactly once, thus alluding to any inconsistencies that may occur.

\begin{alg}[TQS construction algorithm]  \label{alg:construction} 
Let $\Graph(r)$ be a tree with root node $r\in \NodeSet$ and let $\TQS\in \field^{M\times N}$ be the associated graph-partitioned matrix  with  $M=\sum_{i\in \NodeSet} m_i$ and $N=\sum_{i\in \NodeSet} n_i$. A set of generators for the TQS representation of $T$ is obtained by following the steps outlined below.
    \begin{enumerate}
    \item \textbf{Diagonal stage.} Set $\Dmat^{i}=\TQS\nodeid{i,i}$ for $i\in \NodeSet$.
   \item \textbf{Upsweep stage.} For $l=L, L-1, \ldots, 1$ do the following:
   \begin{enumerate}
       \item For every $i\in \NodeSet_l$ with parent node $j=\Parent(i) \in \NodeSet_{l-1}$ and children $\Children(i)=\lbrace w_1, w_2,\ldots, w_{\alpha}\rbrace
       \subset \NodeSet$, write  $\UnitHankelBlock{(i,j)}=\Fmat{(i,j)}  \Gmat{(i,j)}$, where
       \begin{eqnarray*}
         \Fmat{(i,j)} & := &  \begin{blockarray}{ccccc} \begin{block}{c[cccc]}
            \Descendantscompl(i) &  \Xmat{(w_1,i)}\nodeid{\Descendantscompl(i)} & \cdots & \Xmat{(w_{\alpha},i)}\nodeid{\Descendantscompl(i)} &    \TQS\nodeid{\Descendantscompl(i),i} \\
                 \end{block}  \end{blockarray},  \\
         \Gmat{(i,j)} & := & \begin{blockarray}{cccc}
        \Descendants(w_1) & \cdots & \Descendants(w_{\beta}) & i \\
         \begin{block}{[cccc]}
            \Ymat{(w_1,i)}    &   \\
           & \ddots &  \\
           &      &  \Ymat{(w_{\beta},i)}  \\
           &    &        & \Id \\
           \end{block}
         \end{blockarray}. 
       \end{eqnarray*}
       \item Let $\weight{(i,j)} = \rank \Fmat{(i,j)} =  \rank \UnitHankelBlock{(i,j)} $ and compute the low-rank compression $\Fmat{(i,j)} = \Xmat{(i,j)} \Zmat{(i,j)}$.
       \item Set 
       \begin{displaymath}
           \Bmat^i_{j} :=  \Zmat{(i,j)}^i, \quad \Cmat^j_{i} := \Xmat{(i,j)}\nodeid{j}, \quad \Umat^{i}_{j,w_t}  := \Zmat{(i,j)}^{\Descendants(w_t)}
       \end{displaymath}
       for $t=1,2,\ldots,\alpha$.
       \item Define $\Ymat{(i,j)} =  \Zmat{(i,j)} \Gmat{(i,j)}$ so that $\Xmat{(i,j)} \Ymat{(i,j)}$ is a low-rank factorization for $\UnitHankelBlock{(i,j)}$.
   \end{enumerate}
   \item \textbf{Downsweep stage.} For $l=1, 2, \ldots, L$ do the following:
    \begin{enumerate}
       \item For every $i\in \NodeSet_l$ with parent node  $j=\Parent(i)\in \NodeSet_{l-1}$, grandparent node\footnote{For $l=1$ there will be no grandparent node, in which case the corresponding terms associated with it can be ignored.} $k= \Parent(i;2) \in \NodeSet_{l-2}$, and siblings $\Siblings(i)=\lbrace v_1, v_2,\ldots, v_{\beta}\rbrace \subset \NodeSet$, write $\UnitHankelBlock{(j,i)}=\Fmat{(j,i)}  \Gmat{(j,i)}$, where
       \begin{eqnarray*}
         \Fmat{(j,i)} & := &  \begin{blockarray}{cccccc} \begin{block}{c[ccccc]} \Descendants(i) & \Xmat{(k,j)} \nodeid{\Descendants(i)}  &  \Xmat{(v_1,j)} \nodeid{\Descendants(i)} & \cdots & \Xmat{(v_{\beta},j)} \nodeid{\Descendants(i)}   &   \TQS\nodeid{\Descendants(i),j} \\ \end{block}
        \end{blockarray}  \\
         \Gmat{(j,i)} & := & \begin{blockarray}{ccccc} \Descendantscompl(j) & \Descendants(v_1) & \cdots & \Descendants(v_{\beta}) & j \\ 
         \begin{block}{[ccccc]}  \Ymat{(k,j)}  & & & \\ 
            & \Ymat{(v_1,j)}  &  & \\ 
            &  & \ddots & \\ 
            &  &     & \Ymat{(v_{\beta},j)} \\ 
            &  &     &            & \Id \\
         \end{block}
          \end{blockarray}
       \end{eqnarray*}
       \item Let $\weight{(j,i)} = \rank \Fmat{(j,i)} =  \rank \UnitHankelBlock{(j,i)}$ and compute the low-rank compression $\Fmat{(j,i)} = \Xmat{(j,i)} \Zmat{(j,i)}$.
       \item Set  
       \begin{displaymath}
           \Pmat^j_{i} = \Zmat{(j,i)}^{j},\quad \Wmat^i_{(j,k)} = \Zmat{(j,i)}^{\Descendantscompl(j)} ,\quad 
            \Qmat^i_j = \Xmat{(j,i)}\nodeid{i}, \quad   \Vmat^i_{j,v_t} = \Zmat{(j,i)}^{\Descendants(v_t)} 
       \end{displaymath}
       for $t=1,2,\ldots,\beta$.
       \item Set $\Ymat{(j,i)} =  \Zmat{(j,i)} \Gmat{(j,i)}$ so that $\Xmat{(j,i)} \Ymat{(j,i)}$ is a low-rank factorization for $\UnitHankelBlock{(j,i)}$.
   \end{enumerate}
\end{enumerate}
\end{alg}
 We remark that \Cref{alg:construction} is not the only approach for constructing a TQS matrix. There exists some flexibility in algorithmic design choices that could be optimized for parallelism and memory consumption. A more detailed analysis goes outside the scope of this paper. For now, we stay contented that \Cref{alg:construction} presents a valid construction/realization algorithm.

\subsection{Numerical tests} To further validate the construction algorithm for correctness, we have implemented \Cref{alg:construction} in the Julia language\footnote{This code is made available in \href{https://github.com/nithingovindarajan/TQSmatrices}{https://github.com/nithingovindarajan/TQSmatrices}.}. \Cref{tab:main_table} showcases the numerical results obtained with the implemented algorithm for two experiments. In the first experiment, \Cref{alg:construction} is applied to reconstruct minimal TQS representations of randomly generated TQS matrices for the tree graphs $\Graph_a(4)$, $\Graph_b(4)$, $\Graph_c(5)$, and $\Graph_d(7)$. The TQS matrices are first converted into dense matrices after which \Cref{alg:construction} is applied to reconstruct the TQS matrix. In the second experiment, \Cref{alg:construction} is used to construct TQS representations for the inverse of the matrices $\TQS_k$, $k=1,2,3,\ldots$ from \Cref{sec:sparsematrices}. The parameters used to generate $\TQS_k$ are again chosen randomly. Using the adjacency graph as the corresponding tree, it follows from \Cref{prop:TQSinverse} that $\TQS^{-1}_k$ admit a scalar TQS representation, i.e., $\weight{e} = 1$ $\forall e \in \EdgeSet$. Our numerical experiment also confirmed this property. \Cref{table1} and \Cref{table2} show the relative 2-norm error of the constructed TQS matrices (with respect to the original dense matrix) for experiments 1 and 2, respectively. The results suggest that \Cref{alg:construction} can generate TQS approximations up to machine precision accuracy.

\begin{table}[h]
    \centering
    \begin{minipage}{.45\textwidth}
        \centering
  \begin{tabular}{ccc}
        \hline
        \multirow{2}{*}{ } & \multicolumn{2}{c}{\textit{rel. error}} \\
        \cline{2-3}
         & mean & std \\
        \hline
 $\Graph_a(4)$ & 2.2431e-15 & 2.0166e-15 \\
 $\Graph_b(4)$ & 7.6436e-16 & 1.4924e-16 \\
 $\Graph_c(5)$  & 6.5400e-16 & 2.4239e-16 \\
$\Graph_d(7)$  & 4.0584e-15 & 2.8004e-15\\
        \hline
    \end{tabular}
        \subcaption{Computed error statistics for the TQS reconstruction of randomly generated TQS matrices for the tree graphs $\Graph_a(4)$, $\Graph_b(4)$, $\Graph_c(5)$, and $\Graph_d(7)$. } \label{table1}
    \end{minipage}%
    \hfill
    \begin{minipage}{.45\textwidth}
        \centering
  
    \begin{tabular}{ccc}
        \hline
        \multirow{2}{*}{$k$} & \multicolumn{2}{c}{\textit{rel. error}} \\
        \cline{2-3}
         & mean & std \\
        \hline
 1  & 0.0        &  0.0 \\
 2 & 1.41199e-16 &  4.87742e-17 \\
 3 & 2.78289e-16 & 8.81678e-17 \\
 4 & 3.6505e-16  & 1.66578e-16 \\
 5 & 4.71926e-16 & 2.48319e-16 \\
 6 &  6.95086e-16 & 2.30473e-16 \\
 7 & 5.68185e-16 & 1.56358e-16 \\
 8 & 1.03728e-15 &  3.58987e-16 \\
 9 & 1.1168e-15  & 4.68181e-16 \\
        \hline
    \end{tabular}
        \subcaption{Computed error statistics for the TQS construction of the  inverse of the matrices $\TQS_k$, $k=1,2,3,\ldots$ from \Cref{sec:sparsematrices}.} \label{table2}
    \end{minipage}
    \caption{Numerical results obtained with \Cref{alg:construction}. The mean and standard deviation of the relative 2-norm error of the constructed TQS matrix (w.r.t. the original dense matrix) is computed. The statistics are computed from 10 random trials.}
    \label{tab:main_table}
\end{table}

\subsection{Proof of \Cref{thm:universality}} \label{sec:proofuniversality}
We are now ready to prove \Cref{thm:universality}.
\begin{proof}[Proof of \Cref{thm:universality}]
    \Cref{alg:construction} presents a constructive proof for the existence of a TQS representation with rank-profile $\lbrace \weight{e} := \rank \UnitHankelBlock{e} \rbrace_{ e \in \EdgeSet}$.  The only thing left is to show that the TQS representation produced by \Cref{alg:construction} is minimal. This can be verified by setting up a contradiction.  Suppose there exists a TQS representation with rank-profile $\lbrace \rho'_{e} \rbrace_{e\in\EdgeSet}$  and $\rho'_{e} < \weight{e}$  for some edge $e\in\EdgeSet$, then the unit Hankel $\UnitHankelBlock{e}$  admits a low-rank factorization of rank $\rho'_{e} < \weight{e}$, which is not possible.
\end{proof}

\section{TQS linear systems} \label{sec:TQSsystems}
In this section, we examine linear systems $\TQS \vect{x} = \vect{b}$, where $\TQS \in \field^{M\times N}$is a TQS matrix on $\Graph=(\NodeSet,\EdgeSet)$  of dimensions  $M=\sum_{i\in \NodeSet} m_i$ by $N=\sum_{i\in \NodeSet} n_i$. By conformally partitioning $\vect{b}
\in \field^{M}$ and  $\vect{x}
\in \field^{N}$ into sub-vectors  $\vect{b}_i \in \field^{m_i}$ and $\vect{x}_i \in \field^{n_i}$, respectively, we first describe in \Cref{sec:matvec}  how the matrix-vector product $\vect{b}= \TQS \vect{x}$ is evaluated efficiently. Then, in \Cref{sec:linsys}, we proceed and use the relations derived for the matrix-vector product to formulate an efficient method to solve  $\TQS \vect{x} = \vect{b}$ is solved for $\vect{x}$ given $\vect{b}$.

\subsection{Evaluation of matrix-vector product} \label{sec:matvec}

The block entries of a TQS matrix originate from evolving a dynamical system over a graph. In the special case of an SSS matrix, the lower and upper triangular parts of the matrix are the result of running a causal and anti-causal linear time-variant dynamical system. This is exploited in the matrix-vector product. For the general case, the dynamics evolve over a tree, and it becomes much harder to provide a simple characterization. Nonetheless, formulating the ``state-space'' equations shall produce a fast matrix-vector product algorithm for the general case as well.  Recall that each edge $(i,j)\in \EdgeSet$ of the acyclic connected graph $\Graph$ a state vector $\state{(i,j)}\in \field^{\weight{(i,j)}}$.  The transition maps \eqref{eq:transitionmaps} yield the state equations
\begin{equation}
      \state{(i,j)} = \sum_{w \in \kNeighbor(i) \setminus \{ j \}}   \Trans^{i}_{j,w} \state{(w,i)}       +     \Inp^{i}_j \vect{x}_i, \qquad  (i,j) \in \EdgeSet, \label{eq:staterel}
\end{equation}
along with the output equations
\begin{equation}
      \vect{b}_j = \sum_{i \in \kNeighbor(j)}  \Out^j_i \state{(i,j)}   + \Dmat^j \vect{x}_i,   \qquad   j \in \NodeSet. \label{eq:outputrel}
\end{equation}
By picking a root node $r\in \NodeSet$, the state and output equations for the corresponding tree $\Graph(r)$ may be further refined to 
\begin{equation}
      \state{(i,j)} = \begin{cases} \displaystyle \sum_{w \in \Children(i)}   \Umat^{i}_{j,w} \state{(w,i)}       +     \Bmat^{i}_j \vect{x}_i, \qquad  & j = \Parent(i) \\
          \Wmat^{i}_{j,\Parent(i)} \state{(\Parent(i),i)}  +  \displaystyle   \sum_{w \in \Siblings(j)}   \Vmat^{i}_{j,w} \state{(w,i)}       +     \Pmat^{i}_j \vect{x}_i, \qquad           &  i = \Parent(j)
      \end{cases} \label{eq:statefortree}
\end{equation}
and 
\begin{equation}
   \vect{b}_j =   \Qmat^j_{\Parent(j)} \state{(\Parent(j),j)}  + \sum_{i \in \Children(j)}  \Bmat^j_i \state{(i,j)}   + \Dmat^j \vect{x}_i.
\end{equation}
A careful examination of \eqref{eq:statefortree} reveals a natural causal ordering on the state variables. Specifically, for a leaf $i\in\Leaves(\Graph)$, the state equations are simply $\state{(i,j)} = \Bmat^i_{j} \vect{x}_i$.  One may thus start with computing the state vectors at the leaves and then work up toward the interiors of the graph. Once the root node is reached, the reverse process can be initiated by flowing outwards towards the leaves. \Cref{alg:matvec} exactly describes this process. The TQS matrix-vector product involves
\begin{displaymath}
   \mathcal{O} \left( \sum_{i\in\NodeSet} \left( m_i + \sum_{j \in \kNeighbor(i)} r_{(i,j)}  \right)  \left( n_i + \sum_{j \in \kNeighbor(i)} r_{(j,i)}  \right)  \right)
\end{displaymath}
floating point operations (flops). In particular, the complexity becomes a linear-time w.r.t. the matrix dimensions if the properties in \eqref{eq:linearcomplexity} are applicable.

\begin{alg}[TQS matrix-vector product] \label{alg:matvec}
   Given a TQS matrix $\TQS \in \field^{M\times N}$ on $\Graph(r)$  of dimensions  $M=\sum_{i\in \NodeSet} m_i$ by $N=\sum_{i\in \NodeSet} n_i$, the matrix-vector product $\vect{b}_{i} = \sum_{j\in \NodeSet} \TQS\nodeid{i,j} \vect{x}_j$ for $i\in\NodeSet$ is obtained by following the steps outlined below.  
   \begin{enumerate}
    \item \textbf{Diagonal stage.} Initialize $\vect{b}_{i} =\Dmat^i \vect{x}_i$ for $i\in \NodeSet$.
   \item \textbf{Upsweep stage.} For $l=L, L-1, \ldots, 1$ do the following:
   \begin{enumerate}
       \item For every $i\in \NodeSet_l$ with parent node $j=\Parent(i) \in \NodeSet_{l-1}$ and children $\Children(i)=\lbrace w_1, w_2,\ldots, w_{\alpha}\rbrace
       \subset \NodeSet$, evaluate
       \begin{displaymath}
          \state{(i,j)} =  \sum^{\alpha}_{p=1}   \Umat^{i}_{j,w_{p}} \state{(w_p,i)}       +     \Bmat^{i}_j \vect{x}_i.
       \end{displaymath}
      \item Update 
      \begin{displaymath}
          \vect{b}_j \leftarrow \vect{b}_j +  \Cmat^j_i \state{(i,j)}.
      \end{displaymath}
   \end{enumerate}
   \item \textbf{Downsweep stage.} For $l=1, 2, \ldots, L$ do the following:
    \begin{enumerate}
       \item For every $i\in \NodeSet_l$ with parent node  $j=\Parent(i)\in \NodeSet_{l-1}$, grandparent node $k= \Parent(i;2) \in \NodeSet_{l-2}$, and siblings $\Siblings(i)=\lbrace v_1, v_2,\ldots, v_{\beta}\rbrace \subset \NodeSet$, evaluate
        \begin{displaymath}
          \state{(j,i)} =  \Wmat^{j}_{i,k} \state{(k,j)}  +  \sum^{\beta}_{p=1}   \Vmat^{j}_{i,w_{p}} \state{(w_p,i)}       +     \Bmat^{j}_i \vect{x}_j.
       \end{displaymath}
       \item Update 
      \begin{displaymath}
          \vect{b}_i \leftarrow \vect{b}_i +  \Qmat^i_j \state{(j,i)}.
      \end{displaymath}
   \end{enumerate}
\end{enumerate}
\end{alg}

\subsection{Solving linear systems} \label{sec:linsys}
The linear system $\TQS \vect{x} = \vect{b}$ is efficiently solved for $\vect{x}$ given $\vect{b}$ by using the same reasoning introduced in \cite{chandrasekaran2007fast}. By treating  $\{\state{e} \}_{e\in\EdgeSet}$ and $\{ \vect{x}_i\}_{i\in\NodeSet}$ as unknowns, and $\{ \vect{b}_i\}_{i\in\NodeSet}$ as knowns, \eqref{eq:staterel} and \eqref{eq:outputrel} collectively yield the sparse linear system
\begin{subequations}
    \begin{eqnarray}
      \state{(i,j)}   - \displaystyle \sum_{w \in \kNeighbor(i) \setminus \{ j \}}   \Trans^{i}_{j,w} \state{(w,i)}       -     \Inp^{i}_j \vect{x}_i &  = & 0,  \qquad  (i,j) \in \EdgeSet \\
     \displaystyle \sum_{i \in \kNeighbor(j)}  \Out^j_i \state{(i,j)}   + \Dmat^j \vect{x}_i &  =&   \vect{b}_j, \qquad j \in \NodeSet.
\end{eqnarray} \label{eq:sparsesys}
\end{subequations}
Particularly, if we let $\kNeighbor(j) = \{ i_1,i_2,\ldots, i_p \}$ and define
\begin{displaymath}
    \thetavar_j := \begin{bmatrix}
    \vect{x}_j \\  \state{(i_1,j)} \\ \state{(i_2,j)} \\  \vdots \\  \state{(i_p,j)} 
\end{bmatrix}, \qquad \betavar_j = \begin{bmatrix}
    \vect{b}_j \\  0 \\ 0 \\ \vdots \\  0
\end{bmatrix}
\end{displaymath}
the adjecency graph of the matrix expression $\Xi \thetavar = \betavar$ that describes \eqref{eq:sparsesys}  coincides with $\Graph$. That is, $\Xi\nodeid{i,j} \ne 0$ if and only if $(i,j)\in \EdgeSet$. Acyclic graphs are chordal graphs and have a perfect elimination order without any fill-in. Thus, \eqref{eq:sparsesys} may be efficiently solved with any standard (block-)sparse solver. Solving \eqref{eq:sparsesys} involves roughly
\begin{displaymath}
   \mathcal{O} \left( \sum_{i\in\NodeSet} \left( n_i + \sum_{j \in \kNeighbor(i)} r_{(i,j)}  \right)^3  \right)
\end{displaymath}
floating point operations (flops). In particular, the complexity becomes linear w.r.t. the matrix dimensions if the properties in \eqref{eq:linearcomplexity} are applicable.

\section{Conclusions {and future work}} \label{sec:conclusions}
We introduced a new class of representations for rank-structured matrices called tree quasi-separable (TQS) matrices. It was shown that TQS matrices unify and generalize SSS and HSS matrices. Furthermore, by deriving an explicit construction algorithm, we characterized the properties of a minimal TQS representation for a given tree graph-partitioned matrix. Subsequently, we showed that TQS inherits many of the well-known properties of SSS and HSS matrices concerning matrix-vector multiply, matrix-matrix multiply, matrix-matrix addition, and inversion.

Future work will be geared towards the efficient implementation and generalization of many algorithms associated with SSS and HSS matrices. Specifically, in a future paper, we shall derive expressions for the LU factorization and (pseudo-)inverse of TQS matrices.  The potential applications of TQS (and the greater flexibility that is offered by them) will also be explored. Finally, we note that the results associated with TQS may form an essential building block for constructing representations associated with more general 
GIRS matrices (see \cite{girstsstalk}).

\bibliographystyle{siamplain}
\bibliography{references}

\begin{thebibliography}{10}

\bibitem{ambikasaran2013mathcal}
{\sc S.~Ambikasaran and E.~Darve}, {\em An $\mathcal{O} (n \log n)$ fast direct solver for partial hierarchically semi-separable matrices}, Journal of Scientific Computing, 57 (2013), pp.~477--501.

\bibitem{aminfar2016fast}
{\sc A.~Aminfar, S.~Ambikasaran, and E.~Darve}, {\em A fast block low-rank dense solver with applications to finite-element matrices}, Journal of Computational Physics, 304 (2016), pp.~170--188.

\bibitem{aurentz2015fast}
{\sc J.~L. Aurentz, T.~Mach, R.~Vandebril, and D.~S. Watkins}, {\em Fast and backward stable computation of roots of polynomials}, SIAM Journal on Matrix Analysis and Applications, 36 (2015), pp.~942--973.

\bibitem{bella2008computations}
{\sc T.~Bella, Y.~Eidelman, I.~Gohberg, and V.~Olshevsky}, {\em Computations with quasiseparable polynomials and matrices}, Theoretical Computer Science, 409 (2008), pp.~158--179.

\bibitem{borm2003introduction}
{\sc S.~B{\"o}rm, L.~Grasedyck, and W.~Hackbusch}, {\em Introduction to hierarchical matrices with applications}, Engineering Analysis with Boundary Elements, 27 (2003), pp.~405--422.

\bibitem{chandrasekaran2007fast}
{\sc S.~Chandrasekaran, P.~Dewilde, M.~Gu, W.~Lyons, and T.~Pals}, {\em A fast solver for hss representations via sparse matrices}, SIAM Journal on Matrix Analysis and Applications, 29 (2007), pp.~67--81.

\bibitem{chandrasekaran2005some}
{\sc S.~Chandrasekaran, P.~Dewilde, M.~Gu, T.~Pals, X.~Sun, A.-J. van~der Veen, and D.~White}, {\em Some fast algorithms for sequentially semiseparable representations}, SIAM Journal on Matrix Analysis and Applications, 27 (2005), pp.~341--364.

\bibitem{chandrasekaran2002fast}
{\sc S.~Chandrasekaran, P.~Dewilde, M.~Gu, T.~Pals, and A.-J. van~der Veen}, {\em Fast stable solver for sequentially semi-separable linear systems of equations}, in International Conference on High-Performance Computing, Springer, 2002, pp.~545--554.

\bibitem{chandrasekaran2010numerical}
{\sc S.~Chandrasekaran, P.~Dewilde, M.~Gu, and N.~Somasunderam}, {\em On the numerical rank of the off-diagonal blocks of schur complements of discretized elliptic pdes}, SIAM Journal on Matrix Analysis and Applications, 31 (2010), pp.~2261--2290.

\bibitem{chandrasekaran2019graph}
{\sc S.~Chandrasekaran, E.~N. Epperly, and N.~Govindarajan}, {\em Graph-induced rank structures and their representations}, arXiv preprint arXiv:1911.05858,  (2019).

\bibitem{girstsstalk}
{\sc S.~Chandrasekaran and N.~Govindarajan}, {\em Graph induced rank structured matrices and their representations}.
\newblock talk at CAM23 - Computational and Applied Mathematics, Selva di Fasano, Italy, 2023.

\bibitem{shivnithinabhe}
{\sc S.~Chandrasekaran, N.~Govindarajan, and A.~Rajagopal}, {\em Fast algorithms for displacement and low-rank structured matrices}, in Proc. of the 2018 ACM International Symposium on Symbolic and Algebraic Computation, ISSAC '18, {New York, NY, USA}, 2018, Association for Computing Machinery, p.~17–22.

\bibitem{chandrasekaran2006fast}
{\sc S.~Chandrasekaran, M.~Gu, and T.~Pals}, {\em A fast ulv decomposition solver for hierarchically semiseparable representations}, SIAM Journal on Matrix Analysis and Applications, 28 (2006), pp.~603--622.

\bibitem{chandrasekaran2008superfast}
{\sc S.~Chandrasekaran, M.~Gu, X.~Sun, J.~Xia, and J.~Zhu}, {\em A superfast algorithm for toeplitz systems of linear equations}, SIAM Journal on Matrix Analysis and Applications, 29 (2008), pp.~1247--1266.

\bibitem{dewilde1998time}
{\sc P.~Dewilde and A.-J. Van~der Veen}, {\em Time-Varying Systems and Computations}, Springer Science \& Business Media, Dordrecht, The Netherlands, 1998.

\bibitem{eidelman1999new}
{\sc Y.~Eidelman and I.~Gohberg}, {\em On a new class of structured matrices}, Integral Equations and Operator Theory, 34 (1999), pp.~293--324.

\bibitem{greengard1987fast}
{\sc L.~Greengard and V.~Rokhlin}, {\em A fast algorithm for particle simulations}, Journal of Computational Physics, 73 (1987), pp.~325--348.

\bibitem{hackbusch2015hierarchical}
{\sc W.~Hackbusch}, {\em Hierarchical matrices: algorithms and analysis}, vol.~49, Springer, 2015.

\bibitem{hackbusch2002data}
{\sc W.~Hackbusch and S.~B{\"o}rm}, {\em Data-sparse approximation by adaptive $\mathcal{H}^2$-matrices}, Computing, 69 (2002), pp.~1--35.

\bibitem{rokhlin1985rapid}
{\sc V.~Rokhlin}, {\em Rapid solution of integral equations of classical potential theory}, Journal of computational physics, 60 (1985), pp.~187--207.

\bibitem{vandebril2008matrix}
{\sc R.~Vandebril, M.~Van~Barel, and N.~Mastronardi}, {\em Matrix computations and semiseparable matrices: linear systems}, vol.~1, JHU Press, 2008.

\bibitem{verhaegen2022data}
{\sc M.~Verhaegen, C.~Yu, and B.~Sinquin}, {\em Data-Driven Identification of Networks of Dynamic Systems}, Cambridge University Press, 2022.

\bibitem{wang2013efficient}
{\sc S.~Wang, X.~S. Li, J.~Xia, Y.~Situ, and M.~V. De~Hoop}, {\em Efficient scalable algorithms for solving dense linear systems with hierarchically semiseparable structures}, SIAM J. Sci. Comput., 35 (2013), pp.~C519--C544.

\bibitem{xia2012complexity}
{\sc J.~Xia}, {\em On the complexity of some hierarchical structured matrix algorithms}, SIAM J. Matrix Anal. Appl., 33 (2012), pp.~388--410.

\bibitem{xia2012robust}
{\sc J.~Xia}, {\em Robust and efficient multifrontal solver for large discretized pdes}, High-Performance Scientific Computing: Algorithms and Applications,  (2012), pp.~199--217.

\bibitem{xia2010fast}
{\sc J.~Xia, S.~Chandrasekaran, M.~Gu, and X.~S. Li}, {\em Fast algorithms for hierarchically semiseparable matrices}, Numerical Linear Algebra with Applications, 17 (2010), pp.~953--976.

\bibitem{xia2012superfast}
{\sc J.~Xia, Y.~Xi, and M.~Gu}, {\em A superfast structured solver for toeplitz linear systems via randomized sampling}, SIAM J. Matrix Anal. Appl., 33 (2012), pp.~837--858.

\end{thebibliography}
\end{document}